\newtheorem{theorem}{Theorem}
\newtheorem{lemma}[theorem]{Lemma}
\newtheorem{remark}[theorem]{Remark}
\def\neweq#1{\begin{equation}\label{#1}}
\def\endeq{\end{equation}}
\def\eq#1{(\ref{#1})}
\newcommand{\R}{\mathbb{R}}
\newcommand{\N}{\mathbb{N}}
\newcommand{\HH}{\mathcal{H}}
\newcommand{\eps}{\varepsilon}
\newcommand{\integ}{\int_\Omega}
\begin{document}

\title{Modeling suspension bridges\\
through the von {K}\'arm\'an quasilinear plate equations}

\author{Filippo GAZZOLA - Yongda WANG\\
{\small Dipartimento di Matematica, Politecnico di Milano (Italy)}}
\date{}
\maketitle

\begin{center}
{\em Dedicated to Djairo Guedes de Figueiredo, on the occasion of his 80th birthday.}
\end{center}

\begin{abstract}
A rectangular plate modeling the deck of a suspension bridge is considered. The plate may widely oscillate, which suggests to consider models from
nonlinear elasticity. The von K\'arm\'an plate model is studied, complemented with the action of the hangers and with suitable boundary conditions
describing the behavior of the deck. The oscillating modes are determined in full detail. Existence and multiplicity of static equilibria are then
obtained under different assumptions on the strength of the buckling load.\par
{\em Keywords:} suspension bridges, nonlinear plates, von K\'arm\'an equations.\par
{\em Mathematics Subject Classification:} 35G60, 74B20, 74K20, 35A15.
\end{abstract}

\section{Introduction and motivations: nonlinear behavior of suspension bridges}

The purposes of the present paper are to set up a nonlinear model to describe the static behavior of a suspension bridge and to study possible
multiplicity of the equilibrium positions. We view the deck of the bridge as a long narrow rectangular thin plate, hinged on its short edges
where the bridge is supported by the ground, and free on its long edges. Let $L$ denote its length and $2\ell$ denote its width; a realistic
assumption is that $2\ell\cong\frac{L}{100}$.

The rectangular plate resists to transverse loads exclusively by means of bending. The flexural properties of a plate strongly depend on its thickness,
which we denote by $d$, compared with its width $2\ell$ and its length $L$. We assume here that $2\ell<L$ so that $d$ is to be compared with $2\ell$.
From Ventsel-Krauthammer \cite[$\S$ 1.1]{ventsel} we learn that plates may be classified according to the ratio $2\ell/d$:

$\bullet$ if $2\ell\le8d$ we have a thick plate and the analysis of these plates includes all the components of stresses, strains and displacements
as for solid three-dimensional bodies;\par
$\bullet$ if $8d\le2\ell\le80d$ we have a thin plate which may behave in both linear and nonlinear regime according to how large is the ratio between
its deflection and its thickness $d$;\par
$\bullet$ if $2\ell\ge80d$ the plate behaves like a membrane and lacks of flexural rigidity.\par
Let us now turn to a particular suspension bridge. The main span of the collapsed Tacoma Narrows Bridge \cite{ammann,tacoma} had the measures
\neweq{lengths}
L=2800\, \mbox{ft.}\, ,\qquad2\ell=39\, \mbox{ft.}\, ,\qquad d=4\, \mbox{ft.}\ ,
\endeq
see p.11 and Drawings 2 and 3 in \cite{ammann}. Therefore, $2\ell/d=9.75$ and
\begin{center}
{\bf the deck of the Tacoma Narrows Bridge may be considered as a thin plate}.
\end{center}
It is clear that modern suspension bridges with their stiffening trusses are more similar to thick plates.\par
Which theory (linear or nonlinear) models a thin plate depends on the magnitude $W$ of its maximal deflection. If we denote again by $d$ its
thickness, two cases may occur, according to Ventsel-Krauthammer \cite[$\S$ 1.1]{ventsel}:\par
$\bullet$ if $W/d\le0.2$ the plate is classified as stiff: these plates carry loads two dimensionally, mostly by internal bending, twisting
moments and by transverse shear forces;\par
$\bullet$ if $W/d\ge0.3$ the plate is classified as flexible: in this case, the deflections will be accompanied
by stretching of the surface.\par\smallskip
A fundamental feature of stiff plates is that the equation of static equilibrium for a plate
element may be set up for an original (undeformed) configuration of the plate: in this case a linear theory describes with sufficient
accuracy the behavior of the plate. Flexible plates behave somehow in between membranes and stiff plates: when $W\gg d$ the membrane
action is dominant and the flexural stress can be neglected compared with the membrane stress: in this case, a linear theory is not
enough to describe accurately the behavior of the plate and one has to stick to nonlinear theories.\par According to Scott
\cite[pp.49-51]{scott} (see also \cite[p.60]{ammann} and the video \cite{tacoma}), the Board of Engineers stated that under pure
longitudinal oscillations {\em ...the lateral deflection of the center bridge was not measured but did not appear excessive, perhaps
four times the width of the yellow center line (about 2 ft.)} while, after the appearance of the torsional oscillation, {\em ...the
roadway was twisting almost $45^\circ$ from the horizontal, with one side lurching 8.5 m.\ above the other}. This means that it was $W=$2
ft.\ during the vertical oscillations without torsion and $W=$14 ft.\ when the torsional oscillation appeared at the Tacoma Narrows
Bridge. In view of \eq{lengths}, we then have $W/d=0.5$ under pure longitudinal oscillations and $W/d=3.5$ in presence of torsional
oscillations. The conclusion is that
\begin{center}
{\bf the Tacoma Narrows Bridge oscillated in a nonlinear regime}.
\end{center}

This was already known to civil engineers about half a century ago (see e.g.\ \cite{west}) although the difficulties in tackling nonlinear models
prevented a systematic study of the nonlinear regimes. In recent years, the necessity of nonlinear models became even more evident
\cite{brown,gazz,lacarbonara,plautdavis} and the progress of tools in nonlinear analysis and in numerics gives the chance to obtain
responses from nonlinear models. Which nonlinear model should be used is questionable. For two different models of ``nonlinear degenerate bridges''
a structural instability has been recently highlighted in \cite{arga,bergaz}, both numerically and theoretically: it is shown that the torsional
instability has a structural origin and not a mere aerodynamic justification as usually assumed in engineering literature, see
\cite[Section 12]{pugsley} and \cite{scanlan,scantom}. By ``degenerate'' bridge we mean that the deck is not modeled through a full plate as in
actual bridges.\par
A first interesting linear plat theory is due to Kirchhoff \cite{Kirchhoff} in 1850, but it was only 60 years later (in 1910)
that von K\'arm\'an \cite{karman} suggested a two-dimensional system in order to describe large deformations of a thin plate. This theory was considered
a breakthrough in several scientific communities, including in the National Advisory Committee for Aeronautics, an American federal agency during the
19th century: the purpose of this agency was to undertake, to promote, and to institutionalize aeronautical research and the von K\'arm\'an equations
were studied for a comparison between theoretical and experimental results, see \cite{levy2,levy3}. In his report, Levy \cite{levy2} writes that
{\em In the design of thin plates that bend under lateral and edge loading, formulas based on the Kirchhoff theory which neglects stretching and
shearing in the middle surface are quite satisfactory provided that the deflections are small compared with the thickness. If deflections are of
the same order as the thickness, the Kirchhoff theory may yield results that are considerably in error and a more rigorous theory that takes account of
deformations in the middle surface should therefore be applied. The fundamental equations for the more exact theory have been derived by von K\'arm\'an}.\par
In order to describe its structural behavior, in this paper we view the bridge deck as a plate subject to the restoring force due to the hangers
and behaving nonlinearly: we adapt the quasilinear von K\'arm\'an \cite{karman} model to a suspension bridge. In spite of the fact that this model
received severe criticisms about its physical soundness (see \cite[pp.601-602]{truesdell}), many authors have studied the von K\'arm\'an system,
see our incomplete bibliography.
In particular, Ciarlet \cite{ciarlet} provides an important justification of the von K\'arm\'an equations. He makes an asymptotic expansion with respect to
the thickness of a three-dimensional class of elastic plates under suitable loads. He then shows that the leading term of the expansion solves a system of
equations equivalent to those of von K\'arm\'an. Davet \cite{davet} pursues further and
proves that the von K\'arm\'an equations may be justified by asymptotic expansion methods starting from very general 3-dimensional constitutive laws.\par
Following the setting in \cite{fergaz} (see also \cite{algwaiz,yongda,yongda2}), we consider a thin and narrow rectangular plate $\Omega$ where
the two short edges are assumed to be hinged whereas the two long edges are assumed to be free. The plate is subject to three actions:\par
$\bullet$ normal dead and live loads acting orthogonally on the plate;\par
$\bullet$ edge loading, also called buckling loads, namely compressive forces along its edges;\par
$\bullet$ the restoring force due to the hangers, which acts in a neighborhood of the long edges.\par\smallskip
The simplest action is the first one: the dead load is the structural weight whereas the live load may be a wind gust or some vehicle going through the
bridge. As already pointed out by von K\'arm\'an \cite{karman}, large edge loading may yield buckling, that is, the plate may deflect out of its plane
when these forces reach a certain magnitude. The edge loading is called prestressing in engineering literature, see \cite{menn}. This was mathematically
modeled by Berger \cite{berger} with a suitable nonlocal term and tackled with variational methods in a recent paper \cite{algwaiz} which shows that
large prestressing leads to buckling, that is, multiplicity of solutions of the corresponding equation. The critical buckling load may be computed by
finding the smallest eigenvalue of an associated linear problem.\par
An important contribution of Berger-Fife \cite{bergerfife} reduces the von K\'arm\'an system to a variational problem
and tackles it with critical point and bifurcation theories (we point out that there are two different authors named Berger in our references).
Subsequently, Berger \cite{bergervk} made a full analysis of the unloaded clamped plate problem (Dirichlet boundary conditions) which is somehow
the simplest one but does not model the physical situation of a bridge. The loaded clamped plate was analyzed in \cite{knight,knight2} where
existence and possible nonuniqueness results were obtained. Different boundary conditions for the hinged plate (named after Navier) and
for free boundaries were then analyzed with the same tools by Berger-Fife \cite{bergerfife2}. Since free edges of the plate are considered, this last paper
is of particular interest for our purposes. As clearly stated by Ciarlet \cite[p.353]{ciarlet} the boundary conditions for the Airy function {\em are
often left fairly vague in the literature}; we take them in a ``dual form'', that is, more restrictions for the edges yield less restrictions for the
Airy function and viceversa.\par
We adapt here these plate models to a suspension bridge. The main novelties are that the function representing the vertical
displacement of the rectangular plate $\Omega$ satisfies a mixed hinged and free boundary conditions and that the restoring force due to the hangers is
taken into account. It is well-known \cite{figueiredo} that nonlinear elliptic systems are fairly delicate to tackle with variational methods.
The model describing a suspension bridge involves a fourth order quasilinear elliptic system and this brings further difficulties, in particular in the
definition of the action functional. We start by setting in full detail the linear theory which enables us to determine the critical prestressing values
leading to buckling and to the multiplicity of solutions. Then we analyze the problem with normal dead loads but no restoring force and we obtain results in
the spirit of \cite{bergervk,bergerfife}. Finally, we introduce the restoring force due to the hangers and we prove existence and multiplicity of the
equilibrium positions.

\section{Functional framework and the quasilinear equations}

\subsection{Elastic energies of a plate}

The bending energy of the plate $\Omega$ involves curvatures of the surface. Let $\kappa_1$ and $\kappa_2$ denote the principal
curvatures of the graph of the (smooth) function $u$ representing the vertical displacement of the plate in the downwards direction, then
the Kirchhoff model \cite{Kirchhoff} for the bending energy of a deformed plate $\Omega$ of thickness $d>0$ is
\neweq{curva}
\mathbb{E}_B(u)=\frac{E\, d^3}{12(1-\sigma^2)}\int_\Omega\left(\frac{\kappa_1^2}{2}+\frac{\kappa_2^2}{2}+\sigma\kappa_1\kappa_2\right)\, dxdy
\endeq
where $\sigma$ is the Poisson ratio defined by $\sigma=\frac{\lambda}{2\left(\lambda+\mu\right)}$ and $E$ is the Young modulus defined by
$E=2\mu(1+\sigma)$, with the so-called Lam\'e constants $\lambda,\mu $ that depend on the material. For physical reasons it holds that $\mu >0$ and
usually $\lambda>0$ so that
\neweq{sigma}
0<\sigma<\frac{1}{2}.
\endeq

For small deformations the terms in (\ref{curva}) are taken as approximations being purely quadratic with respect to the second
order derivatives of $u$. More precisely, for small deformations $u$, one has
\neweq{smalldeformations}
(\kappa_1+\kappa_2)^2\approx(\Delta u)^2\ ,\quad\kappa_1\kappa_2\approx\det(D^2u)=u_{xx}u_{yy}-u_{xy}^{2}\ ,
\endeq
and therefore
$$\frac{\kappa_1^2}{2}+\frac{\kappa_2^2}{2}+\sigma\kappa_1\kappa_2\approx\frac{1}{2}(\Delta u)^2+(\sigma-1)\det(D^2u).$$
Then, if $f$ denotes the external vertical load (including both dead and live loads) acting on the plate $\Omega$ and if $u$ is the corresponding (small)
vertical displacement of the plate, by \eq{curva} we have that the total energy $\mathbb E_T$ of the plate becomes
\begin{eqnarray}
\mathbb E_T(u) &=&\mathbb{E}_B(u)-\int_\Omega fu\, dxdy \label{eq:original-E}\\
&=&\frac{E\, d^3}{12(1-\sigma^2)}\int_{\Omega}\left(\frac{1}{2}\left(\Delta u\right)^{2}
-(1-\sigma)\det(D^2u)\right) \, dxdy-\int_\Omega fu\, dxdy.\notag
\end{eqnarray}
Note that the ``quadratic'' functional $\mathbb{E}_B(u)$ is positive whenever $|\sigma|<1$, a condition which is ensured by \eq{sigma}.\par
If large deformations are involved, one does not have a linear strain-displacement relation resulting in \eq{smalldeformations}. For a plate of uniform
thickness $d>0$, one assumes that the plate has a middle surface midway between its parallel faces that, in equilibrium, occupies the region $\Omega$ in
the plane $z=0$. Let $w=w(x,y)$, $v=v(x,y)$, $u=u(x,y)$ denote the components (respectively in the $x$, $y$, $z$ directions) of the displacement vector
of the particle of the middle surface which, when
the plate is in equilibrium, occupies the position $(x,y)\in\Omega$: $u$ is the component in the vertical $z$-direction which is related to bending while
$w$ and $v$ are the in-plane stretching components. For large deformations of $\Omega$ there is a coupling between $u$ and $(w,v)$. In order to describe
it, we compute the stretching in the $x$ and $y$ directions (see e.g.\ \cite[(7.80)]{ventsel}):
\neweq{approxstretch}
\eps_x=\sqrt{1+2w_x+u_x^2}-1\approx w_x+\frac{u_x^2}{2}\, ,\quad\eps_y=\sqrt{1+2v_y+u_y^2}-1\approx v_y+\frac{u_y^2}{2}
\endeq
where the approximation is due to the fact that, compared to unity, all the components are small in the horizontal directions $x$ and $y$.
One can also compute the shear strain (see e.g.\ \cite[(7.81)]{ventsel}):
\neweq{sstrain}
\gamma_{xy}\approx w_y+v_x+u_x u_y\, .
\endeq
Finally, it is convenient to introduce the so-called stress resultants which are the integrals of suitable components of the strain
tensor (see e.g.\ \cite[(1.22)]{lagnese}), namely,
$$N^x=\frac{Ed}{1-\sigma^2}\left(w_x+\sigma v_y+\frac12 u_x^2+\frac{\sigma}{2}u_y^2\right)\, ,\quad
N^y=\frac{Ed}{1-\sigma^2}\left(v_y+\sigma w_x+\frac12 u_y^2+\frac{\sigma}{2}u_x^2\right)\, ,$$
\neweq{sresultant}
N^{xy}=\frac{Ed}{2(1+\sigma)}\left(w_y+v_x+u_x u_y\right)\, ,
\endeq
so that
$$\eps_x=\frac{N^x-\sigma N^y}{Ed}\, ,\quad\eps_y=\frac{N^y-\sigma N^x}{Ed}\, ,\quad\gamma_{xy}=\frac{2(1+\sigma)}{Ed}\, N^{xy}\, .$$

We are now in a position to define the energy functional. The first term $\mathbb E_T(u)$ of the energy is due to pure bending and to external loads
and was already computed in \eq{eq:original-E}. For large deformations, one needs to consider also the interaction with the stretching components $v$
and $w$ and the total energy reads (see \cite[(1.7)]{llagnese})
\neweq{J}
J(u,v,w)=\mathbb E_T(u)+\frac{E\, d}{2(1-\sigma^2)}\int_\Omega\left(\eps_x^2+\eps_y^2+2\sigma\, \eps_x\eps_y+\frac{1-\sigma}{2}\, \gamma_{xy}^2\right)\, dxdy
\endeq
which has to be compared with \eq{eq:original-E}. In view of
\eq{approxstretch}-\eq{sstrain} the additional term $I:=J-\mathbb E_T$ may also be written as
\begin{eqnarray*}
I(u,v,w) &\!=\!& \frac{E\, d}{2(1\!-\!\sigma^2)}\int_\Omega\!\left\{\left(w_x\!+\!\frac{u_x^2}{2}\right)^2\!+\!\left(v_y\!+\!\frac{u_y^2}{2}\right)^2\!
+\!2\sigma\left(w_x\!+\!\frac{u_x^2}{2}\right)\left(v_y\!+\!\frac{u_y^2}{2}\right)\right\}dxdy \\
\ & \ &+\frac{E\, d}{4(1\!+\!\sigma)}\int_\Omega\!(w_y\!+\!v_x\!+\!u_xu_y)^2dxdy\, .
\end{eqnarray*}

The next step is to derive the equations and boundary conditions which characterise the critical points of $J$; this will be done in the two following
subsections.

\subsection{The Euler-Lagrange equation}

Let $L$ denote the length of the plate $\Omega$ and $2\ell$ denote its width with $2\ell\cong\frac{L}{100}$. In order to simplify the Fourier series expansions
we take $L=\pi$ so that, in the sequel,
$$
\Omega=(0,\pi)\times(-\ell,\ell)\subset\R^2\qquad\mbox{(with $\ell\ll\pi$).}
$$

The natural functional space where to set up the problem is
$$H^2_*(\Omega):=\Big\{w\in H^2(\Omega);\, w=0\mbox{ on }\{0,\pi\}\times (-\ell,\ell)\Big\}\ .$$
We also define
$$\HH_*(\Omega):=\mbox{ the dual space of }H^2_*(\Omega)$$
and we denote by $\langle\cdot,\cdot\rangle$ the corresponding duality. Since we are in the plane, $H^2(\Omega)\subset
C^0(\overline{\Omega})$ so that the condition on $\{0,\pi\}\times(-\ell,\ell)$ introduced in the definition of $H^2_*(\Omega)$ makes sense.
On the space $H^2(\Omega)$ we define the Monge-Amp\`ere operator
\neweq{MA}
[\phi,\psi]:=\phi_{xx}\psi_{yy}+\phi_{yy}\psi_{xx}-2\phi_{xy}\psi_{xy}\qquad\forall\phi,\psi\in H^2(\Omega)
\endeq
so that, in particular, $[\phi,\phi]=2{\rm det}(D^2\phi)$ where $D^2\phi$ is the Hessian matrix of $\phi$.\par
As pointed out in \cite[Lemma 4.1]{fergaz}, $H^2_*(\Omega)$ is a Hilbert space when endowed with the scalar product
$$(u,v)_{H^2_*(\Omega)}:=\int_\Omega\Big(\Delta u\Delta v-(1-\sigma)[u,v]\Big)\, dxdy \, .$$
The corresponding norm then reads
$$\|u\|_{H^2_*(\Omega)}:=\left(\int_\Omega\Big(|\Delta u|^2-(1-\sigma)[u,u]\Big)\, dxdy \right)^{1/2}\, .$$
The unique minimiser $u$ of the convex functional $\mathbb{E}_T$ in \eq{eq:original-E} over the space $H^2_*(\Omega)$ satisfies the Euler-Lagrange equation
\neweq{loadpb}
\frac{E\, d^3}{12(1-\sigma^2)}\, \Delta^2 u=f(x,y)\qquad\mbox{in }\Omega\, .
\endeq

On the other hand, the Euler-Lagrange equation for the energy $J$ in \eq{J} characterises the critical points of $J$: we need to compute the variation
$\delta J$ of $J$ and to find triples $(u,v,w)$ such that
$$\langle\delta J(u,v,w),(\phi,\psi,\xi)\rangle=\lim_{t\to0}\frac{J(u+t\phi,v+t\psi,w+t\xi)-J(u,v,w)}{t}=0\qquad\forall\phi,\psi,\xi\in C^\infty_c(\Omega)\, .$$
\vfill\eject
After replacing $N^x$, $N^y$, $N^{xy}$, see \eq{sresultant}, this yields
$$\frac{E\, d^3}{12(1-\sigma^2)}\int_\Omega \big(\Delta u\Delta\phi+(\sigma-1)[u,\phi]\big)\, dxdy$$
$$+\int_\Omega\big((N^xu_x+N^{xy}u_y)\phi_x+(N^yu_y+N^{xy}u_x)\phi_y\big)\, dxdy=\int_\Omega f\phi\, dxdy\qquad\forall\phi\in C^\infty_c(\Omega)$$
$$\int_\Omega\big(N^y\psi_y+N^{xy}\psi_x\big)\, dxdy=0\qquad\forall\psi\in C^\infty_c(\Omega)$$
$$\int_\Omega\big(N^x\xi_x+N^{xy}\xi_y\big)\, dxdy =0\qquad\forall\xi\in C^\infty_c(\Omega).$$
Thanks to some integration by parts and by arbitrariness of the test functions, we may rewrite the above identities in strong form
\neweq{systzero}
\begin{array}{cc}
\frac{E\, d^3}{12(1-\sigma^2)}\Delta^2u-(N^xu_x+N^{xy}u_y)_x-(N^yu_y+N^{xy}u_x)_y=f\qquad\mbox{in }\Omega\, ,\\
N^y_y+N^{xy}_x=0\, ,\quad N^x_x+N^{xy}_y=0\qquad\mbox{in }\Omega\, .
\end{array}\endeq
The last two equations in \eq{systzero} show that there exists a function $\Phi$ (called Airy stress function), unique up to an affine function, such that
\neweq{Phi}
\Phi_{yy}=N^x,\quad \Phi_{xx}=N^y,\quad \Phi_{xy}=-N^{xy}\, .
\endeq
Then, after some tedious computations, by using the Monge-Amp\`ere operator \eq{MA} and by normalising the coefficients, the system \eq{systzero} may be written as
\neweq{mostro}
\left\{\begin{array}{ll}
\Delta^2\Phi=-[u,u]\quad & \mbox{in }\Omega\\
\Delta^2 u=[\Phi,u]+f\quad & \mbox{in }\Omega\, .
\end{array}\right.
\endeq

In a plate subjected to compressive forces along its edges, one
should consider a prestressing constraint which may lead to
buckling. Then the system \eq{mostro} becomes
\neweq{mostro1}
\left\{\begin{array}{ll}
\Delta^2\Phi=-[u,u]\quad & \mbox{in }\Omega\\
\Delta^2 u=[\Phi,u]+f+\lambda[F,u]\quad & \mbox{in }\Omega\, .
\end{array}\right.
\endeq
The term $\lambda[F,u]$ in the right hand side of \eq{mostro1}
represents the boundary stress. The parameter $\lambda\ge0$ measures
the magnitude of the compressive forces acting on $\partial\Omega$
while the smooth function $F$ satisfies
\neweq{F}
F\in C^4(\overline{\Omega})\, ,\quad\Delta^2F=0\mbox{ in }\Omega\,
,\quad F_{xx}=F_{xy}=0\mbox{ on }(0,\pi)\times\{\pm\ell\}\, ,
\endeq
see \cite[pp.228-229]{bergerfife2}: the term $\lambda F$ represents
the stress function in the plate resulting from the applied force if
the plate were artificially prevented from deflecting and the
boundary constraints in \eq{F} physically mean that no external
stresses are applied on the free edges of the plate. Following
Knightly-Sather \cite{knight3}, we take
\neweq{choiceF}
F(x,y)=\frac{\ell^2-y^2}{2}\qquad\mbox{so that}\quad[F,u]=-u_{xx}\, .
\endeq
Therefore, \eq{mostro1} becomes
\neweq{mostro11}
\left\{\begin{array}{ll}
\Delta^2\Phi=-[u,u]\quad & \mbox{in }\Omega\\
\Delta^2 u=[\Phi,u]+f-\lambda u_{xx}\quad & \mbox{in }\Omega\, .
\end{array}\right.
\endeq

\subsection{Boundary conditions}

We now determine the boundary conditions to be associated to
\eq{mostro11}. In literature these equations are usually considered
under Dirichlet boundary conditions, see \cite[$\S$ 1.5]{ciarletvk}
and \cite[p.514]{villaggio}. But since we aim to model a suspension
bridge, these conditions are not the correct ones. Following
\cite{fergaz} (see also \cite{algwaiz,yongda}) we view the deck
of a suspension bridge as a long narrow rectangular thin plate
hinged at its two opposite short edges and free on the remaining two long edges.\par
Let us first consider the two short edges $\{0\}\times(-\ell,\ell)$ and $\{\pi\}\times(-\ell,\ell)$. Due to the connection with the ground, $u$ is assumed
to be hinged there and hence it satisfies the Navier boundary conditions:
\neweq{bcuno}
u=u_{xx}=0\qquad\mbox{on }\{0,\pi\}\times(-\ell,\ell)\, .
\endeq
In this case, Ventsel-Krauthammer \cite[Example 7.4]{ventsel} suggest that $N^x=v=0$ on $\{0,\pi\}\times(-\ell,\ell)$. In view of \eq{sresultant} this yields
$$0=w_x+\sigma v_y+\frac12 u_x^2+\frac{\sigma}{2}u_y^2=w_x+\frac12 u_x^2=\frac{Ed}{(1-\sigma^2)\sigma}N^y$$
where the condition $u_y=0$ comes from the first of \eq{bcuno}. In
turn, by \eq{Phi} this implies that $\Phi_{xx}=0$ on
$\{0,\pi\}\times(-\ell,\ell)$. For the second boundary condition we
recall that $N^x=0$ so that, by \eq{Phi}, also $\Phi_{yy}=0$: since
the Airy function $\Phi$ is defined up to the addition of an affine
function, we may take $\Phi=0$. Summarising, we also have
\neweq{bcdue}
\Phi=\Phi_{xx}=0\qquad\mbox{on }\{0,\pi\}\times(-\ell,\ell)\, .
\endeq

On the long edges $(0,\pi)\times\{\pm\ell\}$ the plate is free, which results in
\neweq{bctre}
u_{yy}+\sigma u_{xx}=u_{yyy}+(2-\sigma)u_{xxy}=0\qquad\mbox{on }(0,\pi)\times\{\pm\ell\}\, ,
\endeq
see e.g.\ \cite[(2.40)]{ventsel} or \cite{fergaz}. Note that here
the boundary conditions do not depend on $\lambda$. For the Airy
stress function $\Phi$, we follow the usual Dirichlet boundary
condition on $(0,\pi)\times\{\pm\ell\}$, see \cite{bergerfife,bergerfife2}. Then
\neweq{bcquattro}
\Phi=\Phi_y=0\qquad\mbox{on }(0,\pi)\times\{\pm\ell\}\, .
\endeq

These boundary conditions suggest to introduce the following subspace of $H_*^2(\Omega)$
$$H_{**}^2(\Omega):=\{u \in H_*^2(\Omega): u=u_y=0 \mbox{ on }(0,\pi)\times \{\pm \ell\}\},$$
which is a Hilbert space when endowed with the scalar product and norm
$$(u,v)_{H_{**}^2(\Omega)}:=\int_{\Omega}{\Delta u\Delta v}\, ,\qquad \|u\|_{H_{**}^2(\Omega)}:=\left(\int_{\Omega}{|\Delta u|^2}\right)^{1/2}.$$
We denote the dual space of $H_{**}^2(\Omega)$ by $\mathcal{H}_{**}(\Omega)$.

\subsection{The quasilinear von K\'{a}rm\'{a}n equations modeling suspension bridges}

By putting together the Euler-Lagrange equation \eq{mostro11} and the boundary conditions \eq{bcuno}-\eq{bcquattro} we obtain the system
\neweq{mostro2}
\left\{\begin{array}{ll}
\Delta^2\Phi=-[u,u]\quad & \mbox{in }\Omega\\
\Delta^2 u=[\Phi,u]+f-\lambda u_{xx}\quad & \mbox{in }\Omega\\
u=\Phi=u_{xx}=\Phi_{xx}=0\quad & \mbox{on }\{0,\pi\}\times(-\ell,\ell)\\
u_{yy}+\sigma u_{xx}=u_{yyy}+(2-\sigma)u_{xxy}=0\quad & \mbox{on }(0,\pi)\times\{\pm\ell\}\\
\Phi=\Phi_y=0\quad & \mbox{on }(0,\pi)\times\{\pm\ell\}\, .
\end{array}\right.
\endeq

In a plate modeling a suspension bridge, one should also add the nonlinear restoring action due to the hangers. Then the second
equation in \eq{mostro2} becomes
\neweq{nuova}
\Delta^2 u+\Upsilon(y)g(u)=[\Phi,u]+f-\lambda u_{xx}\quad\mbox{in }\Omega\, .
\endeq
Here $\Upsilon$ is the characteristic function of $(-\ell,-\ell+\eps)\cup (\ell-\eps,\ell)$ for some small $\eps$. This means that the
restoring force due to the hangers is concentrated in two tiny parallel strips adjacent to the long edges (the free part of the boundary). The Official
Report \cite[p.11]{ammann} states that the region of interaction of the hangers with the plate was of approximately 2 ft on each side: this means that
$\eps\approx\frac{\pi}{1500}$. Augusti-Sepe \cite{sepe1} (see also \cite{sepe2}) view the restoring force at the endpoints of a cross-section of
the deck as composed by two connected springs, the top one representing the action of the sustaining cable and the bottom one (connected with the deck)
representing the hangers. And the action of the cables is considered by Bartoli-Spinelli \cite[p.180]{bartoli} the main cause of the nonlinearity of the
restoring force: they suggest quadratic and cubic perturbations of a linear behavior. Assuming that {\bf the vertical axis is oriented downwards}, the restoring
force acts in those parts of the deck which are below the equilibrium position (where $u>0$) while it exerts no action where the deck is above the
equilibrium position ($u<0$). Taking into account all these facts, for the explicit action of the restoring force, we take
\neweq{g}
g(u)=(ku+\delta u^3)^+
\endeq
which is a compromise between the nonlinearities suggested by McKenna-Walter \cite{McKennaWalter} and Plaut-Davis \cite{plautdavis} and follows the
idea of Ferrero-Gazzola \cite{fergaz}. Here $k>0$ denotes the Hooke constant of elasticity of steel (hangers) while $\delta>0$ is a small parameter
reflecting the nonlinear behavior of the sustaining cables. Only the positive part is taken into account due to possible slackening, see
\cite[V-12]{ammann}: the hangers behave as a restoring force if extended (when $u>0$) and give no contribution when they lose tension (when $u\le0$).\par
By assuming \eq{g}, and inserting \eq{nuova} into \eq{mostro2} leads
to the problem
\neweq{finalmostro}
\left\{\begin{array}{ll}
\Delta^2\Phi=-[u,u]\quad & \mbox{in }\Omega\\
\Delta^2 u+\Upsilon(y)(ku+\delta u^3)^+=[\Phi,u]+f-\lambda u_{xx}\quad & \mbox{in }\Omega\\
u=\Phi=u_{xx}=\Phi_{xx}=0\quad & \mbox{on }\{0,\pi\}\times(-\ell,\ell)\\
u_{yy}+\sigma u_{xx}=u_{yyy}+(2-\sigma)u_{xxy}=0\quad & \mbox{on }(0,\pi)\times\{\pm\ell\}\\
\Phi=\Phi_y=0\quad & \mbox{on }(0,\pi)\times\{\pm\ell\}\, .
\end{array}\right.
\endeq

Finally, we go back to the original unknowns $u$, $v$, $w$. After that a solution $(u,\Phi)$ of \eq{mostro2} or \eq{finalmostro} is found,
\eq{sresultant}-\eq{Phi} yield
$$w_x+\sigma v_y=\frac{1-\sigma^2}{E\, d}\, \Phi_{yy}-\frac12 u_x^2-\frac{\sigma}{2}u_y^2\ ,\quad
\sigma w_x+v_y=\frac{1-\sigma^2}{E\, d}\, \Phi_{xx}-\frac12 u_y^2-\frac{\sigma}{2}u_x^2$$
which immediately gives $w_x$ and $v_y$. Upon integration, this gives $w=w(x,y)$ up to the addition of a function only depending on $y$ and $v=v(x,y)$ up
to the addition of a function depending only on $x$. These two additive functions are determined by solving the last constraint given by \eq{sresultant}-\eq{Phi},
that is,
$$w_y+v_x=-\frac{2(1+\sigma)}{E\, d}\, \Phi_{xy}-u_x-u_y\, .$$

\section{Main results}

With no further mention, we assume \eq{sigma}. The first step to study \eq{mostro2} and \eq{finalmostro} is to analyze the spectrum of
the linear problem obtained by taking $\Phi=f=k=\delta=0$:
\neweq{linearproblem}
\left\{\begin{array}{ll}
\Delta^2 u+\lambda u_{xx}=0\quad & \mbox{in }\Omega\\
u=u_{xx}=0\quad & \mbox{on }\{0,\pi\}\times(-\ell,\ell)\\
u_{yy}+\sigma u_{xx}=u_{yyy}+(2-\sigma)u_{xxy}=0\quad & \mbox{on }(0,\pi)\times\{\pm\ell\}\, .
\end{array}\right.
\endeq

In Section \ref{prspect} we prove the following result

\begin{theorem}\label{spectrum}
The problem \eqref{linearproblem} admits a sequence of divergent
eigenvalues
$$
\lambda_1<\lambda_2\le...\le\lambda_k\le...
$$
whose corresponding eigenfunctions $\{\overline{e}_k\}$ form a complete orthonormal system in $H^2_*(\Omega)$.\par Moreover, the
least eigenvalue $\lambda_1$ is simple and is the unique value of $\lambda\in((1-\sigma)^2,1)$ such that
$$\sqrt{1-\lambda^{1/2}}\, \big(\lambda^{1/2}+1-\sigma\big)^2\tanh(\ell\sqrt{1-\lambda^{1/2}}\, )=\sqrt{1+\lambda^{1/2}}\,
\big(\lambda^{1/2}-1+\sigma\big)^2\tanh(\ell\sqrt{1+\lambda^{1/2}}\, )\, ;$$ the corresponding eigenspace is generated by the positive eigenfunction
\begin{equation*}
\overline{e}_1(x,y)=\left\{(\lambda^{1/2}+1-\sigma)\,
\frac{\cosh\Big(y\sqrt{1-\lambda^{1/2}}\Big)}{\cosh\Big(\ell\sqrt{1-\lambda^{1/2}}\Big)}+
(\lambda^{1/2}-1+\sigma)\,
\frac{\cosh\Big(y\sqrt{1+\lambda^{1/2}}\Big)}{\cosh\Big(\ell\sqrt{1+\lambda^{1/2}}\Big)}\right\}\, \sin x\, .
\end{equation*}
\end{theorem}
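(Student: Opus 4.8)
The plan is to combine the spectral theorem for compact self-adjoint operators with an explicit separation of variables in the modes $\smx$. For the first assertion I would rewrite \eqref{linearproblem} in weak form: $\lambda$ is an eigenvalue if and only if there is $u\in H^2_*(\Omega)\setminus\{0\}$ with
$$(u,v)_{H^2_*(\Omega)}=\lambda\integ u_xv_x\,dxdy\qquad\forall v\in H^2_*(\Omega),$$
the free-edge conditions on $(0,\pi)\times\{\pm\ell\}$ being exactly the natural conditions associated with the form $(\cdot,\cdot)_{H^2_*(\Omega)}$ via its Green identity, and the Navier conditions on $\{0,\pi\}\times(-\ell,\ell)$ being incorporated in the space. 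Since $H^2_*(\Omega)\hookrightarrow H^1(\Omega)$ compactly, $(u,v)\mapsto\integ u_xv_x$ is a compact symmetric nonnegative bilinear form on the Hilbert space $\big(H^2_*(\Omega),(\cdot,\cdot)_{H^2_*(\Omega)}\big)$; it is in fact positive definite, because $u_x\equiv0$ together with $u=0$ on $\{0,\pi\}\times(-\ell,\ell)$ forces $u\equiv0$. The spectral theorem then produces the divergent sequence $0<\lambda_1\le\lambda_2\le\dots$ and a $(\cdot,\cdot)_{H^2_*(\Omega)}$-orthonormal basis $\{\overline{e}_k\}$ of corresponding eigenfunctions, which is the first claim.

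For the explicit description I would expand $u(x,y)=\serie h_m(y)\smx$, which is compatible with $u=u_{xx}=0$ on the short edges. As $\Delta^2$ and $\partial_{xx}$ preserve each mode $\smx$ and the long-edge conditions act modewise, the problem decouples into the self-adjoint fourth-order ODE eigenvalue problems on $(-\ell,\ell)$,
$$h_m''''-2m^2h_m''+(m^4-\lambda m^2)h_m=0\qquad\mbox{in }(-\ell,\ell),$$
supplemented with $h_m''(\pm\ell)-\sigma m^2h_m(\pm\ell)=0$ and $h_m'''(\pm\ell)-(2-\sigma)m^2h_m'(\pm\ell)=0$, whose union is the whole spectrum. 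The characteristic equation is $(t^2-m^2)^2=\lambda m^2$, i.e.\ $t^2=m^2\pm m\sqrt\lambda$, and the symmetry $y\mapsto-y$ splits the eigenfunctions into even and odd ones. To capture the least eigenvalue I would take $m=1$ and $0<\lambda<1$, the range giving four real roots $\pm\alpha,\pm\beta$ with $\alpha=\sqrt{1-\sqrt\lambda}$, $\beta=\sqrt{1+\sqrt\lambda}$, and try the even ansatz $h(y)=A\cosh(\alpha y)+B\cosh(\beta y)$. Imposing the two long-edge conditions at $y=\ell$ (those at $-\ell$ follow by parity) and using $\alpha^2+\beta^2=2$, so that $\alpha^2-2+\sigma=-(\beta^2-\sigma)$ and $\beta^2-2+\sigma=-(\alpha^2-\sigma)$, leaves a homogeneous $2\times2$ system for $(A,B)$ whose determinant vanishes precisely when
$$\sqrt{1-\sqrt\lambda}\,(\sqrt\lambda+1-\sigma)^2\tanh(\ell\sqrt{1-\sqrt\lambda})=\sqrt{1+\sqrt\lambda}\,(\sqrt\lambda-1+\sigma)^2\tanh(\ell\sqrt{1+\sqrt\lambda}),$$
and whose kernel is then spanned by $A=(\beta^2-\sigma)/\cosh(\alpha\ell)$, $B=-(\alpha^2-\sigma)/\cosh(\beta\ell)$; since $\beta^2-\sigma=\sqrt\lambda+1-\sigma$ and $-(\alpha^2-\sigma)=\sqrt\lambda-1+\sigma$, this is precisely the stated eigenfunction $\overline{e}_1$.

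It remains to prove that this transcendental equation has a unique root $\lambda_1$ in $((1-\sigma)^2,1)$, that $\lambda_1$ is the least eigenvalue of \eqref{linearproblem}, and that it is simple. Writing $G(\lambda)$ for the difference of the two sides, one has $G((1-\sigma)^2)>0$ (the right side vanishes there, as $\sqrt\lambda=1-\sigma$) and $G(1)<0$ (the left side vanishes there, as $\sqrt\lambda=1$), so a root exists; its uniqueness follows once one checks $G'<0$ on $((1-\sigma)^2,1)$, a lengthy but elementary estimate. Positivity of $\overline{e}_1$ is then automatic: for $\lambda\in((1-\sigma)^2,1)$ both $\sqrt\lambda+1-\sigma$ and $\sqrt\lambda-1+\sigma$ are positive, the $\cosh$-quotients are positive, and $\sin x>0$ on $(0,\pi)$. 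The main obstacle is to confirm that this root is genuinely the smallest eigenvalue and that its eigenspace is one-dimensional. For the higher modes I would rescale: the substitution $z=my$ turns the mode-$m$ problem on $(-\ell,\ell)$ with parameter $\lambda$ into the mode-$1$ problem on $(-m\ell,m\ell)$ with parameter $\lambda/m^2$, so its least eigenvalue equals $m^2\mu_1(m\ell)$, where $\mu_1(L)$ denotes the least eigenvalue of the mode-$1$ problem on $(-L,L)$; the sign analysis of $G$ applies verbatim for every width, giving $\mu_1(L)\in((1-\sigma)^2,1)$, and since $0<\sigma<\frac12$ forces $4(1-\sigma)^2>1$ one obtains, for $m\ge2$, $m^2\mu_1(m\ell)>m^2(1-\sigma)^2\ge4(1-\sigma)^2>1>\mu_1(\ell)$, so no higher mode competes. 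Within the mode $m=1$, comparing the even characteristic equation with the odd one — the latter obtained from the former by interchanging $\tanh(\alpha\ell)$ and $\tanh(\beta\ell)$ — shows that the smallest odd eigenvalue exceeds $\lambda_1$, and a further sign check on $G$ rules out mode-$1$ eigenvalues in $(0,(1-\sigma)^2]$. Putting these together yields $\lambda_1=\mu_1(\ell)$, its minimality, and the one-dimensionality of its eigenspace. All of these comparisons are of the same transcendental-calculus nature as the monotonicity of $G$, and that is where the real work concentrates.
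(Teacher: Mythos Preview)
Your approach is correct and, for the first assertion, essentially identical to the paper's: both reformulate \eqref{linearproblem} weakly, invoke the compact embedding $H^2_*(\Omega)\hookrightarrow H^1(\Omega)$ to obtain a compact self-adjoint operator, and read off the spectrum; both then separate variables via the Fourier basis $\smx$ and reduce to the same fourth-order ODE with the same free-edge boundary conditions, splitting into the cases $m^2\gtreqqless\lambda$.

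For the second assertion the comparison is more interesting. The paper does not actually prove it here: after deriving the transcendental equations for the case $m^2>\lambda$, it simply observes that for $m=1$ the ODE coincides with one already analysed in \cite{fergaz} and invokes that reference for the simplicity of $\lambda_1$, its localisation in $((1-\sigma)^2,1)$, and the explicit eigenfunction. You instead give a self-contained argument: the determinant computation producing the stated transcendental equation and the kernel vector $\overline{e}_1$ is clean and correct, and your rescaling $z=my$ reducing the mode-$m$ problem on $(-\ell,\ell)$ to the mode-$1$ problem on $(-m\ell,m\ell)$ with parameter $\lambda/m^2$ is a genuinely nice device, yielding the lower bound $m^2(1-\sigma)^2>1$ for $m\ge2$ directly from \eqref{sigma}. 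This is more informative than the bare citation.

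Where you are honest about ``the real work concentrating'' is exactly where the gaps lie: the monotonicity $G'<0$ on $((1-\sigma)^2,1)$, the claim that $G$ does not vanish on $(0,(1-\sigma)^2]$ (note that $G(0^+)=0$, so some care is needed near the origin), and the comparison showing the smallest odd mode-$1$ eigenvalue exceeds the smallest even one are all asserted rather than proved. These are precisely the computations hidden inside the paper's citation of \cite{fergaz}, so you have not lost ground relative to the paper, but you have not closed those steps either.
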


The simplicity of the least eigenvalue was not to be expected. It is shown in \cite[\S3]{knight3} that the eigenvalue problem
\eq{linearproblem} for a fully hinged (simply supported) rectangular plate, that is with $u=\Delta u=0$ on the four edges, may admit a
least eigenvalue of multiplicity 2.\par The least eigenvalue $\lambda_1$ represents the critical buckling load and may be characterised variationally by
$$
\lambda_1\, :=\, \min_{v\in H^2_*(\Omega)}\ \frac{\|v\|_{H^2_*(\Omega)}^2}{\|v_x\|_{L^2(\Omega)}^2}\ .
$$
Ferrero-Gazzola \cite{fergaz} studied the eigenvalue problem $\Delta^2u=\lambda u$ under the boundary conditions in
\eq{linearproblem}: by comparing \cite[Theorem 3.4]{fergaz} with the above Theorem \ref{spectrum} we observe that the least eigenvalues
(and eigenfunctions) of the two problems coincide, that is,
\neweq{lambda1}
\lambda_1\, =\, \min_{v\in H^2_*(\Omega)}\
\frac{\|v\|_{H^2_*(\Omega)}^2}{\|v_x\|_{L^2(\Omega)}^2}\, =\,
\min_{v\in H^2_*(\Omega)}\
\frac{\|v\|_{H^2_*(\Omega)}^2}{\|v\|_{L^2(\Omega)}^2}\ .
\endeq
Therefore, the critical buckling load for a rectangular plate equals the eigenvalue relative to the first eigenmode of the plate. In
turn, the first eigenmode is also the first buckling deformation of the plate. {From} \eq{lambda1} we readily infer the Poincar\'e-type inequalities
\neweq{poincarry}
\lambda_1\|v_x\|_{L^2(\Omega)}^2\le\|v\|_{H^2_*(\Omega)}^2\ ,\quad\lambda_1\|v\|_{L^2(\Omega)}^2\le\|v\|_{H^2_*(\Omega)}^2\qquad\forall v\in H^2_*(\Omega)
\endeq
with strict inequality unless $v$ minimises the ratio in \eq{lambda1}, that is, $v$ is a real multiple of $\overline{e}_1$. Note also that by taking
$v(x,y)=\sin x$ one finds that $\lambda_1<1$.\par
Finally, let us mention that Theorem \ref{spectrum} may be complemented with the explicit form of all the eigenfunctions: they
are $\sin(mx)$ ($m\in\N$) multiplied by trigonometric or hyperbolic functions with respect to $y$: we refer again to Section \ref{prspect}.

Then we insert an external load $f$ and we study the existence and multiplicity of solutions of \eq{mostro2}.

\begin{theorem}\label{loadf}
For all $f\in L^2(\Omega)$ and $\lambda\ge0$ \eqref{mostro2} admits a solution $(u,\Phi)\in H_*^2(\Omega)\times H_{**}^2(\Omega)$.
Moreover:\\
(i) if $\lambda\leq \lambda_1$ and $f=0$, then \eqref{mostro2} only admits the trivial solution $(u,\Phi)=(0,0)$;\\
(ii) if $\lambda\in(\lambda_k, \lambda_{k+1}]$ for some $k\geq 1$ and $f=0$, then \eqref{mostro2} admits at least $k$ pairs of
nontrivial solutions;\\
(iii) if $\lambda<\lambda_1$ there exists $K>0$ such that if $\|f\|_{L^2(\Omega)}<K$ then \eqref{mostro2} admits a unique solution
$(u,\Phi)\in H_*^2(\Omega)\times H_{**}^2(\Omega)$;\\
(iv) if $\lambda>\lambda_1$ there exists $K>0$ such that if $\|f\|_{L^2(\Omega)}<K$ then \eqref{mostro2} admits at least three solutions.
\end{theorem}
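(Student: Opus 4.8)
The plan is to reduce \eqref{mostro2} to a single equation for $u$, to recast it variationally, and then to treat the four regimes according to the sign of $\lambda-\lambda_1$ and the size of $\|f\|_{L^2(\Omega)}$. First I would dispose of the Airy function. For $u\in H^2_*(\Omega)$ one has $[u,u]\in L^1(\Omega)\subset\HH_{**}(\Omega)$ (because $H^2_{**}(\Omega)\hookrightarrow C^0(\overline\Omega)$), so by Lax--Milgram the first equation of \eqref{mostro2} with the boundary data $\Phi=\Phi_{xx}=0$ on $\{0,\pi\}\times(-\ell,\ell)$ and $\Phi=\Phi_y=0$ on $(0,\pi)\times\{\pm\ell\}$ has a unique solution $\Phi=\Phi(u)\in H^2_{**}(\Omega)$, i.e. $\integ\Delta\Phi(u)\Delta\psi=-\integ[u,u]\psi$ for all $\psi\in H^2_{**}(\Omega)$. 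I would record that $\Phi(tu)=t^2\Phi(u)$, that $\|\Phi(u)\|_{H^2_{**}(\Omega)}\le C\|u\|^2_{H^2_*(\Omega)}$, the symmetry $\integ[\Phi(u),u]v=\integ[\Phi(u),v]u=\integ[u,v]\Phi(u)$ — which holds exactly because the boundary conditions of $u$ and $\Phi$ are complementary — and the weak continuity of $u\mapsto\Phi(u)$, obtained from $\integ[u_n,u_n]\psi=\integ[u_n,\psi]u_n$ and the compact embedding $H^2_*(\Omega)\hookrightarrow H^1(\Omega)$. Then $(u,\Phi(u))$ solves \eqref{mostro2} iff $u$ is a critical point of the $C^1$ functional
$$\mathcal J(u):=\frac12\|u\|^2_{H^2_*(\Omega)}-\frac\lambda2\|u_x\|^2_{L^2(\Omega)}+\frac14\|\Phi(u)\|^2_{H^2_{**}(\Omega)}-\integ fu$$
on $H^2_*(\Omega)$ (using $\integ u_{xx}v=-\integ u_xv_x$ and that the differential of $u\mapsto\frac14\|\Phi(u)\|^2$ in the direction $v$ is $-\integ[\Phi(u),u]v$). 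Testing the equation with $u$ gives the basic identity
$$\|u\|^2_{H^2_*(\Omega)}+\|\Phi(u)\|^2_{H^2_{**}(\Omega)}-\lambda\|u_x\|^2_{L^2(\Omega)}=\integ fu ,$$
valid at every solution.

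For part (i) ($f=0$, $\lambda\le\lambda_1$) this identity together with \eqref{poincarry} gives $\|u\|^2_{H^2_*}+\|\Phi(u)\|^2_{H^2_{**}}=\lambda\|u_x\|^2_{L^2}\le\lambda_1\|u_x\|^2_{L^2}\le\|u\|^2_{H^2_*}$, whence $\Phi(u)=0$ and $\lambda\|u_x\|^2_{L^2}=\|u\|^2_{H^2_*}$; for $\lambda<\lambda_1$ this contradicts the strictness in \eqref{poincarry} unless $u\equiv0$, while for $\lambda=\lambda_1$ it forces $u=c\,\overline{e}_1$, and then $0=\Phi(u)=c^2\Phi(\overline{e}_1)$ forces $c=0$ because $[\overline{e}_1,\overline{e}_1]=2\det D^2\overline{e}_1\not\equiv0$ (writing $\overline{e}_1=h(y)\sin x$ one has $\det D^2\overline{e}_1=-hh''\sin^2x-(h')^2\cos^2x$ with $h$ a non-constant sum of hyperbolic cosines). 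Still for $\lambda<1$, \eqref{poincarry} and the inequality $\|w_x\|^2_{L^2}\le\|w\|^2_{H^2_*}$ valid when $[w,w]\equiv0$ (from $\|\Delta w\|^2=\|w_{xx}\|^2+2\|w_{xy}\|^2+\|w_{yy}\|^2$ and a one-dimensional Poincaré inequality in $x$) imply that $\mathcal J$ is coercive; being weakly lower semicontinuous it attains a global minimiser, which gives existence for every $\lambda<1$, and in particular for all $\lambda\le\lambda_1$. For part (iii), when $\lambda<\lambda_1$ coercivity and the basic identity yield $\|u\|_{H^2_*(\Omega)}\le C\|f\|_{L^2(\Omega)}$ at every solution; subtracting the equations of two solutions, testing with their difference, and using the multilinearity of the Monge--Ampère terms together with this smallness gives $(1-\lambda/\lambda_1-C\|f\|_{L^2})\|u_1-u_2\|^2_{H^2_*}\le0$, hence uniqueness as soon as $\|f\|_{L^2(\Omega)}<K:=(1-\lambda/\lambda_1)/C$.

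When $\lambda\ge1$ the functional is no longer bounded below — along $u=t\sin x$ one has $\Phi(u)=0$ and $\mathcal J(u)=\tfrac{\pi\ell}{2}(1-\lambda)t^2-t\integ f\sin x$ — so existence for all $\lambda\ge0$ I would obtain by a degree argument: write the reduced problem as a fixed point $u=\mathcal T_\lambda(u)$ with $\mathcal T_\lambda(u):=(\Delta^2)^{-1}\big([\Phi(u),u]+f-\lambda u_{xx}\big)$, where $(\Delta^2)^{-1}$ is the solution operator for the boundary conditions \eqref{bcuno}, \eqref{bctre}; this map is compact by the first paragraph, so once one has an a priori bound $\|u\|_{H^2_*(\Omega)}\le C(\lambda,\|f\|_{L^2})$ for all solutions of $u=s\,\mathcal T_\lambda(u)$, $s\in[0,1]$, Leray--Schauder degree gives a solution. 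The same bound shows that $\mathcal J$ has bounded Palais--Smale sequences and hence, combined with the weak continuity of $u\mapsto\Phi(u)$ and the compactness of $u\mapsto u_x$, that it satisfies $(PS)$. Part (ii) then follows from a symmetric minimax theorem (Clark's theorem / Krasnosel'skii genus): $\mathcal J$ is even, $\mathcal J(0)=0$, and by Theorem~\ref{spectrum} its quadratic part is negative definite on $E_k:=\Span\{\overline{e}_1,\dots,\overline{e}_k\}$ and $\ge0$ on $E_k^{\perp}$ (at $\lambda=\lambda_{k+1}$ one uses in addition $\Phi(\overline{e}_{k+1})\neq0$), so by homogeneity $\mathcal J$ lies below a negative level on a small sphere of $E_k$ and one extracts at least $k$ pairs $\pm u_j$ of nontrivial critical points. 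For part (iv) ($\lambda>\lambda_1$, $\|f\|_{L^2}$ small) one first gets, from part (ii) with $k\ge1$, two buckled solutions $\pm u_\star\neq0$ for $f=0$; these are nondegenerate for generic $\lambda$ and persist as two distinct solutions for small $f$, and a mountain-pass argument over the wall separating them — at a level strictly above theirs — produces a third.

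The hard part is the a priori bound just invoked. Because $\mathcal J$ is genuinely unbounded below for $\lambda>1$, one must prove directly that at a solution the ``developable'' directions, on which $\Phi(u)=0$ and along which $\mathcal J\to-\infty$, cannot occur; this is a von K\'arm\'an–type estimate. Concretely I would combine the energy identity with the equations obtained by projecting the PDE onto the finitely many low eigenmodes $\overline{e}_1,\dots,\overline{e}_k$ with $\lambda_j<\lambda$, namely $\hat u_j\,(1-\lambda/\lambda_j)=\integ[\Phi(u),\overline{e}_j]u+\integ f\overline{e}_j$, and exploit the nondegeneracy $\Phi(\overline{e}_j)\neq0$ to bound the components of $u$ along $E_k$ by $\|\Phi(u)\|_{H^2_{**}}$ plus lower-order terms, closing the estimate against the energy identity (equivalently, a compactness-contradiction argument in which a normalised sequence of unbounded solutions would converge to a nonzero developable eigenfunction of \eqref{linearproblem}, which does not exist since no $\sin(mx)$ times a function of $y$ satisfies the free boundary conditions). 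Extracting enough from the PDE rather than from the energy alone is the delicate point; everything else — $(PS)$, the genus argument and the mountain pass — rests on it.
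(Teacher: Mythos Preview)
Your variational reduction, the treatment of (i), and the uniqueness argument for (iii) are essentially correct and parallel the paper's (the paper proves (iii) by showing that $J$ is strictly convex on the ball in which the a priori bound \eqref{apriori} confines all critical points, rather than by subtracting two equations, but the mechanism is the same). The genuine divergence is on existence, and hence on (ii) and (iv), for large $\lambda$.

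The paper claims that $J$ is coercive and bounded below for \emph{every} $\lambda\ge0$ (Lemma~\ref{coercive}), resting on Lemma~\ref{lemmad}(i): $d(v)=0\Rightarrow v=0$, argued from ``$[v,v]\equiv0$ makes the graph developable, and $v=0$ on the short edges then forces $v\equiv0$''. Your own test function $v=\sin x\in H^2_*(\Omega)$ refutes this under the partially free boundary conditions: $[\sin x,\sin x]\equiv0$, hence $d(\sin x)=0$, while $\|\sin x\|_{H^2_*}^2=\|(\sin x)_x\|_{L^2}^2=\pi\ell$ and $J(t\sin x)=\tfrac{\pi\ell}{2}(1-\lambda)t^2-t\!\int_\Omega f\sin x\to-\infty$ for $\lambda>1$. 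So your observation that $\mathcal J$ is unbounded below for $\lambda>1$ is correct, and the paper's coercivity route does not go through as written. (Your coercivity argument for $\lambda<1$, by contrast, is sound once fleshed out: a normalised diverging sequence has weak limit $w$ with $[w,w]=0$, hence $\|w_x\|_{L^2}^2\le\|w\|_{H^2_*}^2\le1$, contradicting $\|w_x\|_{L^2}^2\ge1/\lambda>1$.)

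Your substitute route does not close either, however. The compactness--contradiction sketch for the a priori bound yields, after normalisation and passage to a weak limit, a nonzero $w\in H^2_*(\Omega)$ with $[w,w]\equiv0$ and $\|w_x\|_{L^2}^2\ge1/\lambda$; but nothing forces $w$ to satisfy \eqref{linearproblem} (the rescaled cubic term $\|u_n\|^2[\Phi(w_n),w_n]$ is not shown to vanish, and the natural free-edge conditions are not inherited by weak limits), and nonzero developable elements of $H^2_*(\Omega)$ \emph{do} exist --- your own $\sin x$ is one --- so no contradiction results. The same gap undermines your Palais--Smale property, and with it the genus argument for (ii) and the mountain pass for (iv). In short: you have correctly diagnosed that the coercivity argument used in the paper cannot work for $\lambda>1$, but neither the paper's proof nor your proposal supplies a valid replacement; the general existence statement and items (ii), (iv) remain unproved on both sides.
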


Theorem \ref{loadf} gives both uniqueness and multiplicity results. Since the solutions are obtained as critical points of an action functional,
they describe the stable and unstable equilibria positions of the plate. When both the buckling load $\lambda$ and the external load $f$ are small
there is just one possible equilibrium position. If one of them is large then multiple equilibrium positions may exist. The uniqueness statement (iii)
has a fairly delicate proof: we will show that the corresponding action functional is ``locally convex'' in the region where the equilibria positions
are confined.\par
The last step is to study the nonlinear plate modeling the suspension bridge, that is, with the action of the hangers. We first define the constants
\neweq{alpha}
\alpha:=\int_\Omega\Upsilon(y)\overline{e}_1^2\, ,\qquad\overline{\lambda}:=(\alpha k+1)\lambda_1>\lambda_1\, ,
\endeq
where $\lambda_1$ denotes the least eigenvalue and $\overline{e}_1$ denotes here the positive least eigenfunction normalised in
$H_*^2(\Omega)$, see Theorem \ref{spectrum}. Then we have

\begin{theorem}\label{withangers}
For all $f\in L^2(\Omega)$, $\lambda\ge0$ and $k,\delta>0$ problem \eqref{finalmostro} admits a solution $(u,\Phi)\in H_*^2(\Omega)\times H_{**}^2(\Omega)$.
Moreover:\\
(i) if $\lambda<\lambda_1$ there exists $K>0$ such that if $\|f\|_{L^2(\Omega)}<K$ then \eqref{finalmostro} admits a unique solution
$(u,\Phi)\in H_*^2(\Omega)\times H_{**}^2(\Omega)$;\\
(ii) if $\lambda>\lambda_1$ and $f=0$ then \eqref{finalmostro} admits at least two solutions
$(u,\Phi)\in H_*^2(\Omega)\times H_{**}^2(\Omega)$ and one of them is trivial and unstable;\\
(iii) if $\overline{\lambda}<\lambda_2$ and $\overline{\lambda}<\lambda<\lambda_2$, there exists $K>0$ such that if $\|f\|_{L^2(\Omega)}<K$
then \eqref{finalmostro} admits at least three solutions $(u,\Phi)\in H_*^2(\Omega)\times H_{**}^2(\Omega)$, two being stable and one being unstable.
\end{theorem}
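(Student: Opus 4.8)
The plan is to recast \eqref{finalmostro} variationally, in the spirit of Theorem \ref{loadf}. Since $\Delta^2$ with the boundary conditions of $H^2_{**}(\Omega)$ is an isomorphism onto $\mathcal H_{**}(\Omega)$ and $u\mapsto[u,u]$ is compact from $H^2_*(\Omega)$ into $\mathcal H_{**}(\Omega)$, every $u\in H^2_*(\Omega)$ determines a unique $\Phi_u\in H^2_{**}(\Omega)$ with $\Delta^2\Phi_u=-[u,u]$, smooth and $2$-homogeneous in $u$; substituting it into the second equation, \eqref{finalmostro} becomes the Euler--Lagrange equation of
\[
\mathcal I(u):=\tfrac12\|u\|_{H^2_*(\Omega)}^2+\tfrac14\int_\Omega|\Delta\Phi_u|^2-\tfrac\lambda2\int_\Omega u_x^2+\int_\Omega\Upsilon(y)\,G(u)-\int_\Omega fu,\qquad G(t):=\tfrac k2(t^+)^2+\tfrac\delta4(t^+)^4,
\]
on $H^2_*(\Omega)$; here one uses $\int_\Omega[\Phi_u,u]u=-\int_\Omega|\Delta\Phi_u|^2$ and the full symmetry of the Monge--Amp\`ere trilinear form, all boundary terms vanishing by \eqref{bcuno}--\eqref{bcquattro}. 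The new term $\int_\Omega\Upsilon G$ is nonnegative, convex and weakly continuous on $H^2_*(\Omega)$ (recall $H^2\hookrightarrow C^0(\overline\Omega)$ in the plane), hence a harmless perturbation of the functional of Theorem \ref{loadf}: existence of at least one critical point for all $f,\lambda,k,\delta$ follows by repeating that proof verbatim (direct minimisation when $\lambda<\lambda_1$, where $\mathcal I$ is coercive by \eqref{poincarry}, and its Galerkin/topological scheme otherwise, the extra term being monotone and nonnegative).

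Part (i) is the ``locally convex'' argument of Theorem \ref{loadf}. Testing $\mathcal I'(u)=0$ with $u$ gives $\|u\|_{H^2_*}^2+\int_\Omega|\Delta\Phi_u|^2+\int_\Omega\Upsilon g(u)u=\lambda\|u_x\|_{L^2}^2+\int_\Omega fu$, so by \eqref{poincarry} $(1-\lambda/\lambda_1)\|u\|_{H^2_*}^2\le\|f\|_{L^2}\|u\|_{L^2}$ and all solutions lie in a ball $B_\rho$ with $\rho=\rho(\|f\|_{L^2})\to0$ (which also re-proves uniqueness of $u=0$ when $f=0$). On $B_\rho$ the quadratic part $\tfrac12\|u\|_{H^2_*}^2-\tfrac\lambda2\|u_x\|_{L^2}^2$ has second variation $\ge(1-\lambda/\lambda_1)\|\cdot\|_{H^2_*}^2$ (uniformly positive), the hanger term is convex, and the smooth quartic term has second variation bounded by $C\|u\|_{H^2_*}^2\,\|\cdot\|_{H^2_*}^2$; hence for $\|f\|_{L^2}$ small $\mathcal I'$ is strictly monotone on $B_\rho$ and has there at most one zero, giving uniqueness together with existence.

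For (ii)--(iii) I would use a Lyapunov--Schmidt reduction. Let $Z$ be the span of the eigenfunctions with $\lambda_j\le\lambda$ (finitely many) and $V=Z^{\perp}$ in $H^2_*(\Omega)$; since each $\overline{e}_j$ is an eigenfunction, $V$ is orthogonal to $Z$ also for $(v,w)\mapsto\int v_xw_x$, so on $V$ the form $\|v\|_{H^2_*}^2-\lambda\|v_x\|_{L^2}^2$ is positive definite (its eigenvalues are the $1-\lambda/\lambda_j$ with $\lambda_j>\lambda$). Thus $\mathcal I(z+\cdot)$ is coercive on $V$ for each $z\in Z$ and, near the relevant region, strictly convex in $v$, so it has a unique minimiser $v(z)$, yielding a $C^1$ reduced functional $\psi(z):=\mathcal I(z+v(z))$ on the finite-dimensional $Z$ whose critical points correspond to those of $\mathcal I$, with $\psi$ coercive (the quartic growth of $\mathcal I$ along $Z$, in particular along $\overline{e}_1$, dominates because $[\overline{e}_1,\overline{e}_1]\not\equiv0$) and minima of $\psi$ giving local minima of $\mathcal I$. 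For $f=0$, $\psi(0)=0$, while $\psi(t\overline{e}_1)\le\mathcal I(t\overline{e}_1)<0$ for small $t<0$ (since $\lambda>\lambda_1$ and $\overline{e}_1>0$, so $G(-t\overline{e}_1)\equiv0$, and $\|\overline{e}_1\|_{H^2_*}=1$, $\|(\overline{e}_1)_x\|_{L^2}^2=1/\lambda_1$ give $\mathcal I(-t\overline{e}_1)=\tfrac{t^2}2(1-\lambda/\lambda_1)+\tfrac{t^4}4\int_\Omega|\Delta\Phi_{\overline{e}_1}|^2$): hence $\psi$ attains a negative minimum at some $z^*\ne0$, producing a nontrivial solution — this proves (ii), $u\equiv0$ being a solution (as $\Phi_0=0$, $g(0)=0$) that is not a local minimum, hence unstable. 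In the range $\overline\lambda<\lambda<\lambda_2$ of (iii) one has $Z=\mathrm{span}\{\overline{e}_1\}$, and along $\mathbb R\overline{e}_1$
\[
\mathcal I(t\overline{e}_1)=\begin{cases}\dfrac{t^2}2\Big(1-\dfrac\lambda{\lambda_1}\Big)+\dfrac{t^4}4\,\beta,&t\le0,\\[3mm]\dfrac{t^2}2\Big(1+k\alpha-\dfrac\lambda{\lambda_1}\Big)+\dfrac{t^4}4\,(\beta+\delta\gamma),&t\ge0,\end{cases}\qquad\beta:=\int_\Omega|\Delta\Phi_{\overline{e}_1}|^2>0,\ \ \gamma:=\int_\Omega\Upsilon\overline{e}_1^4>0,
\]
whose quadratic coefficients are both negative precisely because $\lambda>\lambda_1$ (for $t<0$) and $\lambda>\overline\lambda=(1+k\alpha)\lambda_1$ (for $t>0$); thus $\psi(0)=0$ is a strict local maximum with $\psi<0$ on both sides and $\psi$ coercive, so $\psi$ has a local minimum on each side of $0$ and hence $\ge3$ critical points of $\mathcal I$: two local minima (stable) and one saddle (unstable, equal to $u=0$ when $f=0$). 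For $\|f\|_{L^2}$ small the whole picture persists, since $\psi=\psi_f$ depends continuously on $f$ and keeps its two wells and intermediate bump; this is (iii).

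The \textbf{main obstacle} is the absence of compactness and of global coercivity. The von K\'arm\'an term $\tfrac14\int|\Delta\Phi_u|^2$ vanishes on the ``developable'' directions $\{[u,u]=0\}$ — e.g.\ functions of $x$ alone such as $\sin x$ — and the hanger term is one-sided ($g\equiv0$ on $\{u\le0\}$), so $\mathcal I$ is \emph{not} bounded below as soon as $\lambda>1$ (already $\mathcal I(-t\sin x)=\tfrac{t^2}2\pi\ell(1-\lambda)$ for $f=0$) and is not even; the symmetric minimax methods used for \eqref{mostro2} in Theorem \ref{loadf} are therefore unavailable, and every multiplicity statement must be obtained by \emph{local} variational arguments — confinement to a ball in (i), reduction to a coercive finite-dimensional $\psi$ in (ii)--(iii). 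Making the reduction rigorous — that $v(z)$ is well defined and smooth on a region large enough to contain the two wells, and that these are genuine local minima of $\mathcal I$ on all of $H^2_*(\Omega)$, which requires controlling the second variation of $\mathcal I$ in directions like $\sin x$, where only the von K\'arm\'an and hanger contributions (governed by $\alpha,\beta,\gamma$) counteract the negative term $\pi\ell(1-\lambda)$ — is the delicate core; a further nuisance is that $\mathcal I$ is only $C^1$ (not $C^2$) because of $(u^+)^2$, so all Hessian and stability statements have to be phrased through one-sided second variations or strict monotonicity of $\mathcal I'$.
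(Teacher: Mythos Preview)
Your route and the paper's diverge substantially. The paper does \emph{not} use a Lyapunov--Schmidt reduction: it asserts (via Lemma~\ref{lemmad}(i) and the analog of Lemma~\ref{coercive}) that the functional $J$ is coercive, bounded below and satisfies (PS) for \emph{every} $\lambda\ge0$, so existence comes from global minimisation; (ii) follows by checking $J(t\overline e_1)<0$ for some $t<0$, so the global minimum is nontrivial; and (iii) is obtained by showing $J_0\ge0$ on $E=\overline e_1^\perp$ (using $\lambda\le\lambda_2$), so that the open sets $A^\pm=\{(u,\overline e_1)_{H^2_*}\gtrless0,\ J_0<0\}$ are disconnected, producing two local minima and then a mountain-pass point. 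Your scheme replaces all of this by a finite-dimensional reduction onto $Z=\Span\{\overline e_j:\lambda_j\le\lambda\}$ and an analysis of the reduced functional $\psi$.

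Your ``main obstacle'' paragraph is not a side remark but the heart of the matter, and it is \emph{correct}: $\sin x\in H^2_*(\Omega)$ satisfies $[\sin x,\sin x]\equiv0$, hence $d(\sin x)=0$, contradicting Lemma~\ref{lemmad}(i) as stated; the Berger--Fife argument quoted there needs the rulings of the developable surface to meet the zero boundary, which fails when two edges are free. Consequently $J(-t\sin x)=\tfrac{\pi\ell}{2}(1-\lambda)t^2\to-\infty$ for $\lambda>1$ and $f=0$, so the paper's coercivity claim --- and with it the global-minimisation arguments for existence, (ii) and (iii) --- is unjustified in that range. Your reduction is therefore the right repair, not an alternative. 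That said, your own sketch still has real gaps: the general existence statement (``Galerkin/topological scheme otherwise'') is not an argument; in (ii) you claim coercivity of $\psi$ on $Z$ from $[\overline e_1,\overline e_1]\not\equiv0$, but for $\dim Z>1$ you must rule out developable directions inside $Z$; and in (iii) you still owe the well-posedness and regularity of $v(z)$ on a region large enough to contain both wells, together with the verification that minima of $\psi$ are genuine local minima of $\mathcal I$ on all of $H^2_*(\Omega)$ (precisely the control along directions like $\sin x$ that you flag). For (iii) the paper's disconnectedness device on $A^\pm$ and your one-dimensional reduction are close cousins --- both hinge on $\lambda<\lambda_2$ making $\overline e_1^\perp$ harmless --- but neither is complete without an honest substitute for the missing coercivity.
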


Also Theorem \ref{withangers} gives both uniqueness and multiplicity results. Item (ii) states that even in absence of an external load ($f=0$),
if the buckling load $\lambda$ is sufficiently large then there exists at least two equilibrium positions; we conjecture that if we further assume that
$\lambda<\overline{\lambda}$ then there exist no other solutions and that the equilibrium positions look like in Figure \ref{twobridges}.
\begin{figure}[ht]
\begin{center}
{\includegraphics[height=16mm, width=166mm]{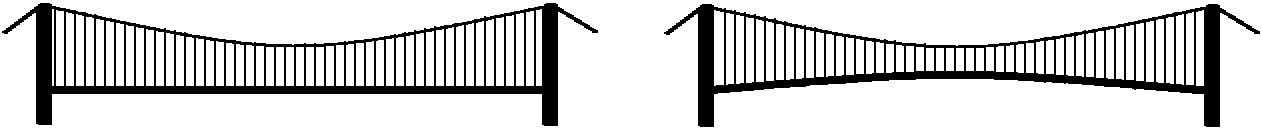}}
\caption{Equilibrium positions of the buckled bridge.}\label{twobridges}
\end{center}
\end{figure}
In the left picture we see the trivial equilibrium $u=0$ which is unstable due to the buckling load. In the right picture we see the stable equilibrium
for some $u<0$ (above the horizontal position). We conjecture that it is a negative multiple of the first eigenfunction $\overline{e}_1$, see
Theorem \ref{spectrum}; since $\ell$ is very small, a rough approximation shows that this negative multiple looks like $\approx C\sin(x)$ for some
$C<0$, which is the shape represented in the right picture. The reason of this conjecture will become clear in the proof, see in particular the plots in
Figure \ref{qualitative} in Section \ref{proofwith}: in this pattern, a crucial role is played by the positivity of $\overline{e}_1$. Our feeling is
that the action functional corresponding to this case has a qualitative shape as described in Figure \ref{saddlevon},
\begin{figure}[ht]
\begin{center}
{\includegraphics[height=40mm, width=42mm]{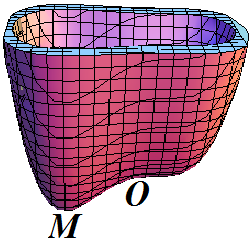}}
\caption{Qualitative shape of the action functional for Theorem \ref{withangers} (ii) when $\lambda<\overline{\lambda}$.}\label{saddlevon}
\end{center}
\end{figure}
where $O$ is the trivial unstable equilibrium and $M$ is the stable equilibrium. If there were no hangers also the opposite position would
be a stable equilibrium. But the presence of the restoring force requires a larger buckling term in order to generate a positive (downwards)
displacement. Indeed, item (iii) states, in particular, that if $f=0$ and the buckling load is large then there exist three equilibria: one
is trivial and unstable, the second is the enlarged negative one already found in item (ii), the third should precisely be the positive one
which appears because the buckling load $\lambda$ is stronger than the restoring force due to the hangers. All these conjectures and qualitative
explanations are supported by similar results for a simplified (one dimensional) beam equation, see \cite[Theorem 3.2]{giorgi3}.

\begin{remark}{\em {\bf (Open problem)}
Can the assumption $\overline{\lambda}<\lambda_2$ in Theorem \ref{withangers} $(iii)$ be weakened or removed? In our proof this assumption is needed to
disconnect two open regions of negativity of the action functional. But, perhaps, other critical point theorems may be applied.}
\end{remark}

\begin{remark}{\em {\bf (Regularity)} A weak solution satisfies $(u,\Phi)\in H_*^2(\Omega)\times H_{**}^2(\Omega)$: then the assumption $f\in L^2(\Omega)$ implies
that $\Delta^2 u\in L^1(\Omega)$. By an embedding and elliptic regularity we infer that $u\in H_*^2(\Omega)\cap H^{3-\varepsilon}(\Omega)$ for all $\varepsilon>0$
and then $D^2u\in H^{1-\varepsilon}(\Omega)$. Therefore, $[u,u]\in L^q(\Omega)$ for all $1\leq q<\infty$. Hence, $\Phi\in W^{4,q}(\Omega)$ and, in turn,
$[u,\Phi]\in L^q(\Omega)$ for all $1\leq q<\infty$. Moreover, $f\in L^2(\Omega)$ implies $\Delta^2 u\in L^2(\Omega)$ and then $u\in H^4(\Omega)$.
This means that the generalized solution $(u,\Phi)$ is also a strong solution.
For smoother $f$, the regularity of $(u,\Phi)$ can be increased.}
\end{remark}

\section{Preliminaries: some useful operators and functionals}

For all $v,w \in H_*^2(\Omega)$, consider the problem
\neweq{phiproblem}
\left\{\begin{array}{ll}
\Delta^2 \Phi=-[v,w]\quad & \mbox{in }\Omega\\
\Phi=\Phi_{xx}=0\quad & \mbox{on }\{0,\pi\}\times(-\ell,\ell)\\
\Phi=\Phi_y=0\quad & \mbox{on }(0,\pi)\times\{\pm\ell\}\, .
\end{array}\right.
\endeq
We claim that \eq{phiproblem} has a unique solution $\Phi=\Phi(v,w)$ and $\Phi\in H_{**}^2(\Omega)$.

Since $\Omega\subset \mathbb{R}^2$, we have $H^{1+\varepsilon}(\Omega)\Subset L^{\infty}(\Omega)=(L^1(\Omega))'$, for all
$\varepsilon >0$. On the other hand, $L^1(\Omega)\subset (L^{\infty}(\Omega))'\Subset H^{-(1+\varepsilon)}(\Omega)$.
If $v,w\in H_*^2(\Omega)\subset H^2(\Omega)$, then $[v,w]\in L^1(\Omega)$. Therefore,
$$[v,w]\in H^{-(1+\varepsilon)}(\Omega) \qquad \forall \varepsilon >0\, .$$
Then by the Lax-Milgram Theorem and the regularity theory of elliptic equations, there exists a
unique solution of \eq{phiproblem} and $\Phi\in H^{3-\varepsilon}(\Omega)$ for all $\varepsilon>0$.
An embedding and the boundary conditions show that $\Phi\in H_{**}^2(\Omega)$, which completes the proof of the claim.\par
This result enables us to define a bilinear form $B=B(v,w)=-\Phi$, where $\Phi$ is the unique solution of \eq{phiproblem}; this form is implicitly
characterised by
$$B:(H_*^2(\Omega))^2\rightarrow H_{**}^2(\Omega)\, ,\quad (B(v,w),\varphi)_{H_{**}^2(\Omega)}=\int_{\Omega}{[v,w]\varphi}\quad
\forall v,w\in H_*^2(\Omega)\, ,\ \varphi\in H_{**}^2(\Omega)\, .$$

Similarly, one can prove that for all $v\in H_*^2(\Omega)$ and $\varphi\in H_{**}^2(\Omega)$ there exists a unique solution
$\Psi\in H_*^2(\Omega)$ of the problem
$$
\left\{\begin{array}{ll}
\Delta^2 \Psi=-[v,\varphi]\quad & \mbox{in }\Omega\\
\Psi=\Psi_{xx}=0\quad & \mbox{on }\{0,\pi\}\times(-\ell,\ell)\\
\Psi_{yy}+\sigma\Psi_{xx}=\Psi_{yyy}+(2-\sigma)\Psi_{xxy}=0\quad & \mbox{on }(0,\pi)\times\{\pm\ell\}\, .
\end{array}\right.
$$
This defines another bilinear form $C=C(v,\varphi)=-\Psi$ which is implicitly characterised by
$$C:H_*^2(\Omega)\times H_{**}^2(\Omega)\rightarrow H_*^2(\Omega)\, ,\quad (C(v,\varphi),w)_{H_*^2(\Omega)}=\int_{\Omega}[v,\varphi]w\quad
\forall v,w\in H_*^2(\Omega)\, ,\ \varphi\in H_{**}^2(\Omega)\, .$$
Then we prove

\begin{lemma}\label{lemmaa}
The trilinear form
\neweq{tri}
(H_*^2(\Omega))^3\ni(v,w,\varphi)\mapsto\int_{\Omega}{[v,w]\varphi}
\endeq
is independent of the order of $v,w,\varphi$ if at least one of them is in $H_{**}^2(\Omega)$. Moreover,
if $\varphi\in H_{**}^2(\Omega), v,w\in (H_*^2(\Omega))^2$, then
\neweq{BC}
(B(v,w),\varphi)_{H_{**}^2(\Omega)}=(B(w,v),\varphi)_{H_{**}^2(\Omega)}=(C(v,\varphi),w)_{H_*^2(\Omega)}=(C(w,\varphi),v)_{H_*^2(\Omega)}.
\endeq
Finally, the operators $B$ and $C$ are compact.
\end{lemma}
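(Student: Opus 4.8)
The plan is to establish the three assertions in sequence, each building on the previous. \emph{Step 1 (symmetry of the trilinear form).} The Monge–Amp\`ere bracket $[v,w]$ is symmetric in $v,w$ by definition \eqref{MA}, so the only real content is invariance under swapping $\varphi$ with one of the other two arguments. This is a two-fold integration by parts: writing $[v,w]=(v_xw_y-v_yw_x)_x\cdot(\text{something})$ — more precisely, using the divergence-structure identity $\int_\Omega[v,w]\varphi = \int_\Omega[v,\varphi]w + (\text{boundary terms})$ — I would integrate by parts twice to move derivatives from $v,w$ onto $\varphi$ and back. The boundary terms involve traces of first and second derivatives on $\partial\Omega$; here is exactly where the hypothesis ``one of the three is in $H^2_{**}(\Omega)$'' is used: the conditions $\varphi=\varphi_y=0$ on the long edges and $\varphi=\varphi_{xx}=0$ (hence $\varphi_{xy}=0$, and $\varphi_{yy}$ controlled) on the short edges kill all the boundary contributions. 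Since $C^\infty$ functions vanishing appropriately are dense, it suffices to verify the integration by parts for smooth representatives and then pass to the limit using continuity of the trilinear form on $(H^2(\Omega))^3$ (which follows from $H^2(\Omega)\hookrightarrow W^{1,4}(\Omega)$ in dimension two, so that $[v,w]\in L^2$ when paired against $\varphi\in H^2\subset L^\infty$, or more simply $[v,w]\in L^1$ against $\varphi\in L^\infty$).

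\emph{Step 2 (the identities \eqref{BC}).} These are now almost immediate. By definition of $B$, $(B(v,w),\varphi)_{H^2_{**}}=\int_\Omega[v,w]\varphi$, and symmetry of the bracket gives the first equality. By definition of $C$, $(C(v,\varphi),w)_{H^2_*}=\int_\Omega[v,\varphi]w$, and Step 1 (applied with $\varphi\in H^2_{**}$) shows $\int_\Omega[v,\varphi]w=\int_\Omega[v,w]\varphi$, giving the third equality; the fourth follows the same way with the roles of $v$ and $w$ interchanged.

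\emph{Step 3 (compactness of $B$ and $C$).} For $B$: if $v_n,w_n$ are bounded in $H^2_*(\Omega)$, then by the Rellich–Kondrachov theorem they converge strongly (up to subsequences) in $W^{1,p}(\Omega)$ for every $p<\infty$, while $D^2v_n, D^2w_n$ converge only weakly in $L^2$. The bracket $[v_n,w_n]$ is a sum of products (second derivative)$\times$(second derivative); to get strong convergence of $[v_n,w_n]$ in, say, $H^{-(1+\varepsilon)}(\Omega)$ I would instead exploit the ``dual'' divergence form: for $\varphi\in H^{1+\varepsilon}(\Omega)$, $\int_\Omega[v_n,w_n]\varphi = \int_\Omega\big((v_n)_{xx}(w_n)_{yy}+\cdots\big)\varphi$ can be integrated by parts once to $-\int_\Omega\big((v_n)_{x}(w_n)_{yy}\varphi_x+\cdots\big)+\text{bdry}$, trading one derivative off $v_n$ (which now converges strongly) at the cost of one derivative on $\varphi$. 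This shows $[v_n,w_n]\to[v,w]$ strongly in $H^{-(1+\varepsilon)}(\Omega)$, hence by continuous dependence in the elliptic problem \eqref{phiproblem} (Lax–Milgram estimate) $B(v_n,w_n)=-\Phi_n\to-\Phi$ strongly in $H^2_{**}(\Omega)$. The argument for $C$ is identical, using the elliptic problem defining $\Psi$ and the already-established mapping property $C:H^2_*\times H^2_{**}\to H^2_*$.

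The main obstacle is Step 1: carefully tracking every boundary term in the double integration by parts and checking that the mixed hinged/free conditions defining $H^2_{**}(\Omega)$ — not the full Dirichlet conditions — are exactly enough to annihilate them. In particular one must be attentive to the corner points $(0,\pm\ell)$ and $(\pi,\pm\ell)$, where the two families of boundary conditions meet; the density argument with smooth functions respecting the boundary conditions of $H^2_{**}(\Omega)$ is what makes this rigorous without a delicate trace analysis.
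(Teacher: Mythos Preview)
Your Steps 1 and 2 coincide with the paper's argument: density reduces to smooth functions, the bracket $[v,w]$ is symmetric, and integration by parts exchanges $w$ and $\varphi$ once one of them lies in $H^2_{**}(\Omega)$ (so that the boundary contributions vanish); the identities \eqref{BC} then follow formally.

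For Step 3 the paper takes a cleaner and slightly different route. Rather than integrating by parts once against a test function $\varphi\in H^{1+\varepsilon}(\Omega)$, the paper \emph{reuses the symmetry just established in Step~1}: for $\varphi\in H^2_{**}(\Omega)$ one writes $\int_\Omega[v,w]\varphi=\int_\Omega[\varphi,w]v$, and then a single further integration by parts (moving one derivative from $w$ to $v$) produces
\[
\int_\Omega[\varphi,w]v=\int_\Omega \varphi_{xy}(w_xv_y+w_yv_x)-\int_\Omega(\varphi_{xx}w_yv_y+\varphi_{yy}w_xv_x),
\]
which immediately yields the \emph{symmetric} bound $\|B(v,w)\|_{H^2_{**}}\le c\,\|v\|_{W^{1,4}}\|w\|_{W^{1,4}}$, namely \eqref{boundB}. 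Compactness is then a two-line consequence of the splitting $B(v_n,w_n)-B(v,w)=B(v_n-v,w_n)+B(v,w_n-w)$ together with the compact embedding $H^2_*(\Omega)\Subset W^{1,4}(\Omega)$. Your route---one integration by parts leaving a second derivative on $w$---also works and gives an asymmetric estimate $\|B(v,w)\|\le c\,\|v\|_{W^{1,4}}\|w\|_{H^2}$, which still suffices for compactness via the same splitting; but note two points. First, you should test against $\varphi\in H^2_{**}(\Omega)$ (which vanishes on all of $\partial\Omega$) rather than a generic $\varphi\in H^{1+\varepsilon}(\Omega)$, since otherwise the boundary terms in your integration by parts do not vanish; the natural dual space for the convergence of $[v_n,w_n]$ is $\mathcal H_{**}(\Omega)$, not $H^{-(1+\varepsilon)}(\Omega)$. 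Second, your ``$\cdots$'' conceals a cancellation: a naive single integration by parts on each term of $[v,w]$ produces third derivatives of $w$, which cancel only if the cross term $-2v_{xy}w_{xy}$ is handled by splitting and integrating half in $x$, half in $y$. The paper's device of invoking Step~1 first sidesteps this bookkeeping, and has the further advantage that the resulting estimate \eqref{boundB} is exactly what is needed later for the convexity bound \eqref{dd} in the proof of Theorem~\ref{loadf}(iii).
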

\begin{proof} By a density argument and by continuity it suffices to prove all the identities for smooth functions $v,w,\varphi$, in such a way that
third interior derivatives and second boundary derivatives are well defined and integration by parts is allowed.
In the trilinear form \eq{tri} one can exchange the order of $v$ and $w$ by exploiting the symmetry of the Monge-Amp\`ere operator,
that is, $[v,w]=[w,v]$ for all $v$ and $w$. So, we may assume that one among $w,\varphi$ is in $H_{**}^2(\Omega)$: note that this function also has
vanishing $x$-derivative on $(0,\pi)\times\{\pm\ell\}$. Then some integration by parts enable to switch the position of $w$ and $\varphi$.\par
From the just proved symmetry of the trilinear form \eq{tri} we immediately infer \eq{BC}.\par
If $\varphi\in H_{**}^2(\Omega)$, then $\varphi_{xx}=\varphi_{xy}=0$ on $(0,\pi)\times\{\pm\ell\}$ and
an integration by parts yields
$$
(B(v,w),\varphi)_{H_{**}^2(\Omega)}=\int_{\Omega}{[v,w]\varphi}=\int_{\Omega}{[\varphi,w]v}
=\int_{\Omega}{\varphi_{xy}(w_x v_y+w_y v_x)}-\int_{\Omega}{(\varphi_{xx}w_y v_y+\varphi_{yy}w_x v_x)}.
$$
In turn, this shows that
$$
|(B(v,w),\varphi)_{H_{**}^2(\Omega)}|\leq c\|\varphi\|_{H_{**}^2(\Omega)}\|v\|_{W^{1,4}(\Omega)}\|w\|_{W^{1,4}(\Omega)}, \qquad
\forall v,w \in H_*^2(\Omega), \forall \varphi \in H_{**}^2(\Omega).
$$
Therefore,
\neweq{boundB}
\|B(v,w)\|_{H_{**}^2(\Omega)}=\sup_{0\neq \varphi\in H_{**}^2(\Omega)}\frac{(B(v,w),\varphi)_{H_{**}^2(\Omega)}}
{\|\varphi\|_{H_{**}^2(\Omega)}}\leq c\|v\|_{W^{1,4}(\Omega)}\|w\|_{W^{1,4}(\Omega)}.
\endeq
Assume that the sequence $\{(v_n,w_n)\}\subset H_*^2(\Omega)$ weakly converges to $(v,w)\in H_*^2(\Omega)$. Then the triangle inequality and the
just proved estimate yield
\begin{eqnarray*}
\|B(v_n,w_n)-B(v,w)\|_{H_{**}^2(\Omega)} &\leq& \|B(v_n-v,w_n)\|_{H_{**}^2(\Omega)}+\|B(v,w_n-w)\|_{H_{**}^2(\Omega)}\\
\ &\leq& c\|v_n-v\|_{W^{1,4}(\Omega)}\|w_n\|_{W^{1,4}(\Omega)}+c\|v\|_{W^{1,4}(\Omega)}\|w_n-w\|_{W^{1,4}(\Omega)}\, .
\end{eqnarray*}
The compact embedding $H_*^2(\Omega)\Subset W^{1,4}(\Omega)$ then shows that
$$\|B(v_n,w_n)-B(v,w)\|_{H_{**}^2(\Omega)} \rightarrow 0$$
and hence that $B$ is a compact operator. The proof for $C$ is similar.\end{proof}

We now define another operator $D:H_*^2(\Omega)\rightarrow H_*^2(\Omega)$ by
$$D(v)=C(v,B(v,v)) \qquad \forall v\in H_*^2(\Omega)$$
and we prove

\begin{lemma} \label{lemmac}
The operator $D$ is compact.
\end{lemma}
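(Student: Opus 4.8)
The plan is to recognise $D$ as a composition of the two compact bilinear maps $B$ and $C$ produced in Lemma \ref{lemmaa}, and to transport compactness through this composition. Concretely, I would take an arbitrary bounded sequence $\{v_n\}\subset H_*^2(\Omega)$ and show that $\{D(v_n)\}$ has a strongly convergent subsequence in $H_*^2(\Omega)$; this is exactly the statement that $D$ maps bounded sets to relatively compact sets. Since $H_*^2(\Omega)$ is a Hilbert space, after extracting a subsequence we may assume $v_n\rightharpoonup v$ weakly in $H_*^2(\Omega)$, and the goal becomes to prove $D(v_n)\to D(v)$ strongly in $H_*^2(\Omega)$ along that subsequence.

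The first step is to pass the weak convergence through the inner map. From $v_n\rightharpoonup v$ in $H_*^2(\Omega)$ we get $(v_n,v_n)\rightharpoonup(v,v)$ weakly in $(H_*^2(\Omega))^2$, so the compactness of $B$ established in Lemma \ref{lemmaa} yields $B(v_n,v_n)\to B(v,v)$ \emph{strongly} in $H_{**}^2(\Omega)$; alternatively this follows directly from the bilinearity of $B$, the bound \eqref{boundB}, and the compact embedding $H_*^2(\Omega)\Subset W^{1,4}(\Omega)$, by the very computation used in the proof of Lemma \ref{lemmaa}. Consequently the pair $(v_n,B(v_n,v_n))$ converges to $(v,B(v,v))$ weakly in $H_*^2(\Omega)\times H_{**}^2(\Omega)$ --- weakly in the first component and strongly (hence weakly) in the second. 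The second step is then immediate: since $C$ is compact (Lemma \ref{lemmaa}), i.e.\ it sends jointly weakly convergent sequences in $H_*^2(\Omega)\times H_{**}^2(\Omega)$ to strongly convergent sequences in $H_*^2(\Omega)$, we conclude
$$D(v_n)=C\bigl(v_n,B(v_n,v_n)\bigr)\ \longrightarrow\ C\bigl(v,B(v,v)\bigr)=D(v)\qquad\text{strongly in }H_*^2(\Omega),$$
which finishes the argument.

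I do not expect a serious obstacle here: once Lemma \ref{lemmaa} is in hand the proof is a short bookkeeping of weak and strong convergences. The one point that genuinely must be used, rather than mere continuity, is the \emph{compactness} of $B$: it is what upgrades the weak convergence $v_n\rightharpoonup v$ into strong convergence of $B(v_n,v_n)$, so that $(v_n,B(v_n,v_n))$ is still an admissible input for the compactness statement of $C$. If one prefers to avoid invoking the abstract compactness of $C$, the same conclusion is reached by writing, via bilinearity, $D(v_n)-D(v)=C\bigl(v_n-v,B(v_n,v_n)\bigr)+C\bigl(v,B(v_n,v_n)-B(v,v)\bigr)$: the second term tends to $0$ by continuity of $C$ and the first step, and the first term tends to $0$ from a bound for $C$ analogous to \eqref{boundB} together with $v_n-v\to0$ in $W^{1,4}(\Omega)$ and the boundedness of $B(v_n,v_n)$ in $H_{**}^2(\Omega)$; establishing that $C$-bound is routine, since in the identity defining $C$ the second argument lies in $H_{**}^2(\Omega)$ and hence satisfies the boundary conditions that made the integration by parts work for $B$.
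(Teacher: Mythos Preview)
Your proposal is correct and follows essentially the same route as the paper: starting from a weakly convergent sequence $v_n\rightharpoonup v$ in $H_*^2(\Omega)$, use the compactness of $B$ from Lemma~\ref{lemmaa} to get $B(v_n,v_n)\to B(v,v)$ strongly in $H_{**}^2(\Omega)$, and then the compactness of $C$ to conclude $C(v_n,B(v_n,v_n))\to C(v,B(v,v))$ strongly in $H_*^2(\Omega)$. The paper's proof is just a terser version of your first paragraph; your additional remarks (the bilinear splitting and the $C$-analogue of \eqref{boundB}) are correct but not needed.
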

\begin{proof}
Assume that the sequence $\{v_n\}\subset H_*^2(\Omega)$ weakly converges to $v \in H_*^2(\Omega)$. Then, by Lemma \ref{lemmaa},
$$B(v_n,v_n)\rightarrow B(v,v) \mbox{ in } H_{**}^2(\Omega),\qquad C(v_n, B(v_n,v_n))\rightarrow C(v, B(v,v)) \mbox{ in }H_*^2(\Omega).$$
This proves that $D(v_n)\rightarrow D(v)$ in $H_*^2(\Omega)$ and that $D$ is a compact operator.
\end{proof}

In turn, the operator $D$ enables us to define a functional $d: H_*^2(\Omega)\rightarrow \mathbb{R}$ by
$$d(v)=\frac{1}{4}(D(v),v)_{H_*^2(\Omega)} \qquad \forall v \in H_*^2(\Omega)\, .$$
In the next statement we prove some of its properties.

\begin{lemma} \label{lemmad}
The functional $d: H_*^2(\Omega)\rightarrow \mathbb{R}$ has the following properties:\\
(i) $d$ is nonnegative and $d(v)=0$ if and only if $v=0$ in $\Omega$.
Moreover,
$$d(v)=\frac{1}{4}\|B(v,v)\|_{H_{**}^2(\Omega)}^2;$$
(ii) $d$ is quartic, i.e., $$d(r v)=r^4 d(v),\quad \forall r\in\mathbb{R}, \forall v\in H_*^2(\Omega);$$
(iii) $d$ is differentiable in $H_*^2(\Omega)$ and
$$\langle d'(v),w\rangle=(D(v),w)_{H_*^2(\Omega)}, \qquad v,w \in H_*^2(\Omega);$$
(iv) $d$ is weakly continuous on $H_*^2(\Omega)$.
\end{lemma}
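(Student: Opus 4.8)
The plan is to reduce all four assertions to the algebraic identities \eqref{BC} of Lemma \ref{lemmaa} and to the compactness supplied by Lemmas \ref{lemmaa} and \ref{lemmac}; no new elliptic estimate is needed. For \textbf{(i)}, apply the chain \eqref{BC} with $\varphi=B(v,v)\in H_{**}^2(\Omega)$ and $w=v$:
$$(D(v),v)_{H_*^2(\Omega)}=(C(v,B(v,v)),v)_{H_*^2(\Omega)}=(B(v,v),B(v,v))_{H_{**}^2(\Omega)}=\|B(v,v)\|_{H_{**}^2(\Omega)}^2,$$
so $d(v)=\tfrac14\|B(v,v)\|_{H_{**}^2(\Omega)}^2\ge0$ and the displayed formula in (i) holds. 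If $d(v)=0$ then $B(v,v)=0$, i.e.\ $\Phi\equiv0$ solves $\Delta^2\Phi=-[v,v]$ together with the boundary conditions of \eqref{phiproblem}; by the uniqueness already established for that problem this forces $[v,v]\equiv0$ in $\Omega$, i.e.\ $\det(D^2v)\equiv0$ a.e., and one then deduces $v\equiv0$. I expect this last implication to be the only genuinely delicate point of the lemma and the place where some care with the boundary conditions defining $H_*^2(\Omega)$ is required.

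For \textbf{(ii)}, bilinearity of $B$ gives $B(rv,rv)=r^2B(v,v)$, hence $d(rv)=\tfrac14\|r^2B(v,v)\|_{H_{**}^2(\Omega)}^2=r^4d(v)$. For \textbf{(iii)}, fix $v,w\in H_*^2(\Omega)$; using bilinearity and the symmetry $B(v,w)=B(w,v)$ (immediate from \eqref{BC}, or from the definition of $B$) one has
$$B(v+tw,v+tw)=B(v,v)+2tB(v,w)+t^2B(w,w),$$
so $t\mapsto d(v+tw)=\tfrac14\|B(v,v)+2tB(v,w)+t^2B(w,w)\|_{H_{**}^2(\Omega)}^2$ is a quartic polynomial in $t$, whose derivative at $t=0$ equals $(B(v,v),B(v,w))_{H_{**}^2(\Omega)}$. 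A further use of \eqref{BC} (with $\varphi=B(v,v)$) rewrites this as $(C(v,B(v,v)),w)_{H_*^2(\Omega)}=(D(v),w)_{H_*^2(\Omega)}$, the announced Gâteaux derivative; it is linear in $w$ and, by \eqref{boundB} and the embedding $H_*^2(\Omega)\Subset W^{1,4}(\Omega)$, bounded in $w$. Since $d$ is a bounded homogeneous polynomial of degree four — it is the diagonal restriction of the bounded $4$-linear form $(v_1,\dots,v_4)\mapsto(B(v_1,v_2),B(v_3,v_4))_{H_{**}^2(\Omega)}$ — it is in fact $C^\infty$, so the Gâteaux derivative is the Fréchet derivative $d'$.

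For \textbf{(iv)}, let $v_n\rightharpoonup v$ in $H_*^2(\Omega)$. By Lemma \ref{lemmac}, $D(v_n)\to D(v)$ strongly in $H_*^2(\Omega)$ (equivalently, by Lemma \ref{lemmaa}, $B(v_n,v_n)\to B(v,v)$ strongly in $H_{**}^2(\Omega)$). Writing $4d(v_n)-4d(v)=(D(v_n)-D(v),v_n)_{H_*^2(\Omega)}+(D(v),v_n-v)_{H_*^2(\Omega)}$, the first term tends to $0$ because $\{v_n\}$ is bounded and the second because $v_n\rightharpoonup v$; alternatively, $d(v_n)=\tfrac14\|B(v_n,v_n)\|_{H_{**}^2(\Omega)}^2\to\tfrac14\|B(v,v)\|_{H_{**}^2(\Omega)}^2=d(v)$ by continuity of the norm. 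Hence $d$ is weakly continuous. In summary the argument is essentially bookkeeping on top of Lemma \ref{lemmaa}; the only step I would treat with real care is the vanishing characterization in (i).
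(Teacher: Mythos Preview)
Your proof matches the paper's almost exactly; parts (ii) and (iv) are identical, and your treatment of (iii) via the formula $d(v)=\tfrac14\|B(v,v)\|_{H_{**}^2}^2$ is in fact slightly cleaner than the paper's direct expansion of $(D(v+\eps w),v+\eps w)_{H_*^2}$, though the content is the same. The one step you correctly flag as delicate --- the implication $[v,v]\equiv 0\Rightarrow v\equiv 0$ in (i) --- is the only place where the paper adds something you omit: since $[v,v]=2\det(D^2v)$ is proportional to the Gaussian curvature of the graph of $v$, its identical vanishing forces the surface to be developable (ruled by straight lines), and then the Dirichlet condition $v=0$ on the two short edges $\{0,\pi\}\times(-\ell,\ell)$ built into $H_*^2(\Omega)$ forces $v\equiv 0$; the paper attributes this argument to Berger--Fife \cite[Lemma 3.2$'$]{bergerfife2}.
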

\begin{proof} (i) By \eq{BC} we know that for any $v\in H_*^2(\Omega)$,
$$(D(v),v)_{H_*^2(\Omega)}=(C(v,B(v,v)),v)_{H_*^2(\Omega)}=(B(v,v),B(v,v))_{H_{**}^2(\Omega)}=\|B(v,v)\|_{H_{**}^2(\Omega)}^2.$$
Whence, if $d(v)=0$, then $B(v,v)=0$ and $[v,v]=0$, see \eq{phiproblem}. But $[v,v]$ is proportional to the Gaussian curvature and since it
vanishes identically this implies that the surface $v=v(x,y)$ is covered by straight lines. By using the boundary condition \eq{bcuno} we finally
infer that $v\equiv0$. This idea of the last part of this proof is taken from \cite[Lemma 3.2']{bergerfife2}.\par
(ii) The functional $d$ is quartic as a trivial consequence of its definition.\par
(iii) From \eq{BC} we infer that
\neweq{BC1}
(C(v,B(v,w)),v)_{H_*^2(\Omega)}=(B(v,v),B(v,w))_{H_{**}^2(\Omega)}=(C(v,B(v,v)),w)_{H_*^2(\Omega)}\quad\forall v,w\in H_*^2(\Omega).
\endeq
Then we compute
\begin{align*}
\langle d'(v),w\rangle&=\lim_{\varepsilon\to0}\frac{1}{4\varepsilon}\{(D(v+\varepsilon w),v+\varepsilon w)_{H_*^2(\Omega)}-(D(v),v)_{H_*^2(\Omega)}\}\\
&=\lim_{\varepsilon\to0}\frac{1}{4\varepsilon}\{(C(v+\varepsilon w, B(v+\varepsilon w,v+\varepsilon w)),
v+\varepsilon w)_{H_*^2(\Omega)}-(C(v,B(v,v)),v)_{H_*^2(\Omega)}\}\\
&=\frac{1}{4}\{(C(w, B(v,v)),v)_{H_*^2(\Omega)}+(C(v, B(v,v)),w)_{H_*^2(\Omega)}+2(C(v,B(v,w)),v)_{H_*^2(\Omega)}\}\\
\mbox{by \eq{BC} }&=\frac{1}{2}\{(C(v, B(v,v)),w)_{H_*^2(\Omega)}+(C(v,B(v,w)),v)_{H_*^2(\Omega)}\}\\
\mbox{by \eq{BC1} }&=(D(v),w)_{H_*^2(\Omega)},
\end{align*}
which proves (iii).\par
(iv) Assume that the sequence $\{v_n\}\subset H_*^2(\Omega)$ weakly converges to $v \in H_*^2(\Omega)$. Then by Lemma \ref{lemmac} we know that
$$\lim_{n\rightarrow\infty}\|D(v_n)-D(v)\|_{H_*^2(\Omega)}=0.$$
This shows that
$$\lim_{n\rightarrow\infty}(D(v_n)-D(v),v_n)_{H_*^2(\Omega)}=0.$$
Finally, this yields
$$d(v_n)-d(v)=\frac{1}{4}(D(v_n)-D(v),v_n)_{H_*^2(\Omega)}+\frac{1}{4}(D(v),v_n-v)_{H_*^2(\Omega)}\rightarrow0$$
which proves (iv).\end{proof}

\section{Proof of Theorem \ref{spectrum}}\label{prspect}

In this section we prove Theorem \ref{spectrum} and we give some
more details about the eigenvalues and eigenfunctions of
\eq{linearproblem}. We proceed as in \cite[Theorem 3.4]{fergaz}, see
also \cite[Theorem 4]{algwaiz}, with some changes due to the
presence of the buckling term. We write the eigenvalue problem
\eqref{linearproblem} as
$$(u_x,v_x)_{L^2(\Omega)}=\frac{1}{\lambda}(u,v)_{H^2_*(\Omega)}\qquad\forall v\in H^2_*(\Omega).$$
Define the linear operator $T:H^2_*(\Omega)\to H^2_*(\Omega)$ such
that
$$(Tu,v)_{H^2_*(\Omega)}=(u_x,v_x)_{L^2(\Omega)}\qquad\forall v\in H^2_*(\Omega).$$
The operator $T$ is self-adjoint since
$$(Tu,v)_{H^2_*(\Omega)}=(u_x,v_x)_{L^2(\Omega)}=(v_x,u_x)_{L^2(\Omega)}=(u,Tv)_{H^2_*(\Omega)}\qquad\forall u,v\in H^2_*(\Omega)\, .$$
Moreover, by the compact embedding $H^2_*(\Omega)\Subset
H^1(\Omega)$ and the definition of $T$, the following implications
hold:
\begin{eqnarray*}
u_n\rightharpoonup u\mbox{ in }H^2_*(\Omega) & \Longrightarrow &
(u_n)_x\to u_x\mbox{ in }L^2(\Omega)\ \Longrightarrow\
\sup_{\|v\|_{H^2_*(\Omega)}=1}\ ((u_n-u)_x,v_x)_{L^2(\Omega)}\to0\\
\ & \Longrightarrow & \sup_{\|v\|_{H^2_*(\Omega)}=1}\
(T(u_n-u),v)_{H^2_*(\Omega)}\to0\ \Longrightarrow\ Tu_n\to Tu\mbox{
in }H^2_*(\Omega)
\end{eqnarray*}
which shows that $T$ is also compact. Then the spectral theory of
linear compact self-adjoint operator yields that
\eqref{linearproblem} admits an ordered increasing sequence of
eigenvalues and the corresponding eigenfunctions form an Hilbertian
basis of $H^2_*(\Omega)$. This proves the first part of Theorem
\ref{spectrum}.\par According to the boundary conditions on
$x=0,\pi$, we seek eigenfunctions in the form:
\neweq{u}
u(x,y)=\sum_{m=1}^{+\infty} h_m(y)\sin(mx) \qquad \text{for
}(x,y)\in (0,\pi)\times(-\ell,\ell)\, .
\endeq
Then we are led to find nontrivial solutions of the ordinary
differential equation
\begin{equation}\label{eq:ODE-h_m}
h_m''''(y)-2m^2 h_m''(y)+(m^4-m^2\lambda)h_m(y)=0\,
,\qquad(\lambda>0)
\end{equation}
with the boundary conditions
\begin{equation} \label{eq:bound-cond}
h_m''(\pm \ell)-\sigma m^2h_m(\pm\ell)=0 \, , \qquad
h_m'''(\pm\ell)+(\sigma-2)m^2 h_m'(\pm\ell)=0 \, .
\end{equation}
The characteristic equation related to \eqref{eq:ODE-h_m} is
$\alpha^4-2m^2\alpha^2+m^4-m^2\lambda=0$ and then
\begin{equation} \label{eq:alpha^2}
\alpha^2=m^2\pm m\sqrt{\lambda}\, .
\end{equation}
For a given $\lambda>0$ three cases have to be distinguished.\par
$\bullet$ {\bf The case $m^2>\lambda$.} By \eqref{eq:alpha^2} we
infer
\neweq{betagamma}
\alpha=\pm \beta\mbox{ or  }\alpha=\pm
\gamma\qquad\mbox{with}\qquad\sqrt{m^2-m\sqrt{\lambda}}=:\gamma<\beta:=\sqrt{m^2+m\sqrt{\lambda}}\,
.
\endeq
Nontrivial solutions of \eqref{eq:ODE-h_m} have the form
\begin{equation} \label{eq:prima-hm}
h_m(y)=a\cosh(\beta y)+b\sinh(\beta y)+c\cosh(\gamma
y)+d\sinh(\gamma y)\qquad(a,b,c,d\in\R) \, .
\end{equation}
By imposing the boundary conditions \eqref{eq:bound-cond} and
arguing as in \cite{fergaz} we see that a nontrivial solution of
\eq{eq:ODE-h_m} exists if and only if one of the two following
equalities holds:
\neweq{primaeq}
\frac{\gamma}{(\gamma^2-m^2\sigma)^2}\,
\tanh(\ell\gamma)=\frac{\beta}{(\beta^2-m^2\sigma)^2}\,
\tanh(\ell\beta)\, ,
\endeq
\neweq{secondaeq}
\frac{\beta}{(\beta^2-m^2\sigma)^2}\,
\coth(\ell\beta)=\frac{\gamma}{(\gamma^2-m^2\sigma)^2}\,
\coth(\ell\gamma)\, .
\endeq

For any integer $m>\sqrt{\lambda}$ such that \eq{primaeq} holds, the
function $h_m$ in \eq{eq:prima-hm} with $b=d=0$ and suitable
$a=a_m\neq0$ and $c=c_m\neq0$ yields the eigenfunction
$h_m(y)\sin(mx)$ associated to the eigenvalue $\lambda$. Similarly,
for any integer $m>\sqrt{\lambda}$ such that \eq{secondaeq} holds,
the function $h_m$ in \eq{eq:prima-hm} with $a=c=0$ and suitable
$b=b_m\neq0$ and $d=d_m\neq0$ yields the eigenfunction
$h_m(y)\sin(mx)$ associated to the eigenvalue $\lambda$. Clearly,
the number of both such integers is finite. In particular, when
$m=1$ the equation \eq{eq:ODE-h_m} coincides with
\cite[(57)]{fergaz}. Therefore, the statement about the least
eigenvalue and the explicit form of the corresponding eigenfunction
hold.\par $\bullet$ {\bf The case $m^2=\lambda$.} This case is
completely similar to the second case in \cite{fergaz}. By
\eqref{eq:alpha^2} we infer that possible nontrivial solutions of
\eqref{eq:ODE-h_m}-\eq{eq:bound-cond} have the form
$$
h_m(y)=a\cosh(\sqrt{2}my)+b\sinh(\sqrt{2}my)+c+dy\qquad(a,b,c,d\in\R)\, .
$$
Then one sees that $a=c=0$ if \eqref{sigma} holds. Moreover, let
$\overline{s}>0$ the unique solution of
$\tanh(s)=\left(\frac{\sigma}{2-\sigma}\right)^2s$. If
$m_*:=\overline{s}/\ell\sqrt2$ is an integer, and only in this case,
then $\lambda=m_*^2$ is an eigenvalue and the corresponding eigenfunction is
$$\Big[\sigma\ell\sinh(\sqrt2 m_*y)+(2-\sigma)\sinh(\sqrt2 m_*\ell)\, y\Big]\, \sin(m_*x)\, .$$

$\bullet$ {\bf The case $m^2<\lambda$.} By \eqref{eq:alpha^2} we infer that
$$
\alpha=\pm \beta\mbox{ or }\alpha=\pm i\gamma\mbox{ \ with \
}\sqrt{m\sqrt{\lambda}-m^2}=\gamma<\beta=\sqrt{m\sqrt{\lambda}+m^2}\, .
$$
Therefore, possible nontrivial solutions of \eq{eq:ODE-h_m} have the form
$$
h_m(y)=a\cosh(\beta y)+b\sinh(\beta y)+c\cos(\gamma y)+d\sin(\gamma
y)\qquad(a,b,c,d\in\R)\, .
$$
Differentiating $h_m$ and imposing the boundary conditions \eqref{eq:bound-cond} yields the two systems:
$$
\left\{\begin{array}{ll}
(\beta^2-m^2\sigma)\cosh(\beta\ell)a-(\gamma^2+m^2\sigma)\cos(\gamma\ell)c=0 \\
(\beta^3-m^2(2-\sigma)\beta)\sinh(\beta\ell)a+(\gamma^3+m^2(2-\sigma)\gamma)\sin(\gamma\ell)c=0\, ,
\end{array}\right.
$$
$$
\left\{\begin{array}{ll}
(\beta^2-m^2\sigma)\sinh(\beta\ell)b-(\gamma^2+m^2\sigma)\sin(\gamma\ell)d=0\\
(\beta^3-m^2(2-\sigma)\beta)\cosh(\beta\ell)b-(\gamma^3+m^2(2-\sigma)\gamma)\cos(\gamma\ell)d=0\, .
\end{array}\right.
$$
Due to the presence of trigonometric sine and cosine, for any integer $m$ there exists a sequence $\zeta_k^m\uparrow +\infty$ such
that $\zeta_k^m>m^2$ for all $k\in\N$ and such that if $\lambda=\zeta_k^m$ for some $k$ then one of the above systems
admits a nontrivial solution. On the other hand, for any eigenvalue $\lambda$ there exists at most a finite number of integers $m$ such
that $m^2<\lambda$; if these integers yield nontrivial solutions $h_m$, then the function $h_m(y)\sin(mx)$ is an eigenfunction
corresponding to $\lambda$.

\section{Proof of Theorem \ref{loadf}}

By Lemma \ref{lemmad} we know that a functional whose critical points are solutions of the problem \eq{mostro2} reads
$$J(u)=\frac{1}{2}\|u\|_{H_*^2(\Omega)}^2+d(u)-\frac{\lambda}{2}\|u_x\|_{L^2(\Omega)}^2-\int_\Omega{fu}\qquad\forall u\in H_*^2(\Omega).$$
By combining Lemmas \ref{lemmaa}-\ref{lemmac}-\ref{lemmad}, we obtain a one-to-one correspondence between solutions of \eq{mostro2}
and critical points of the functional $J$:

\begin{lemma}\label{critical}
Let $f\in L^2(\Omega)$. The couple $(u,\Phi)\in H^2_*(\Omega)\times H^2_{**}(\Omega)$ is a weak solution of \eqref{mostro2} if and only if
$u\in H^2_*(\Omega)$ is a critical point of $J$ and if $\Phi\in H^2_{**}(\Omega)$ weakly solves $\Delta^2\Phi=-[u,u]$ in $\Omega$.
\end{lemma}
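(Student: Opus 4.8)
The plan is to unfold the weak formulation of \eq{mostro2} and match it, term by term, with the Euler--Lagrange identity $\langle J'(u),v\rangle=0$, using the operators $B$, $C$, $D$ and the lemmas already proved. Recall that, by definition, $(u,\Phi)\in H_*^2(\Omega)\times H_{**}^2(\Omega)$ is a weak solution of \eq{mostro2} when
$$\int_\Omega\Delta\Phi\,\Delta\varphi=-\int_\Omega[u,u]\varphi\qquad\forall\varphi\in H_{**}^2(\Omega)$$
and
$$(u,v)_{H_*^2(\Omega)}=\int_\Omega[\Phi,u]v+\lambda\int_\Omega u_xv_x+\int_\Omega fv\qquad\forall v\in H_*^2(\Omega)\, .$$
Here the first identity is the weak form of $\Delta^2\Phi=-[u,u]$ complemented with the boundary conditions defining $H_{**}^2(\Omega)$ (the condition $\Phi_{xx}=0$ on the short edges being the natural one, while $\Phi=\Phi_y=0$ on $\partial\Omega$ is essential); in the second identity, the pairing of $\Delta^2u$ with $v\in H_*^2(\Omega)$ produces $(u,v)_{H_*^2(\Omega)}$ precisely because the scalar product of $H_*^2(\Omega)$ is tailored so that $u_{xx}=0$ on the short edges and $u_{yy}+\sigma u_{xx}=u_{yyy}+(2-\sigma)u_{xxy}=0$ on the long edges are its natural boundary conditions (see \cite{fergaz}), and the pairing of $-\lambda u_{xx}$ with $v$ contributes $\lambda\int_\Omega u_xv_x$ with no boundary term since $v=0$ on $\{0,\pi\}\times(-\ell,\ell)$.

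First I would dispose of the Airy function. Since $\int_\Omega[u,u]\varphi=(B(u,u),\varphi)_{H_{**}^2(\Omega)}$ by the defining property of $B$ and $\int_\Omega\Delta\Phi\,\Delta\varphi=(\Phi,\varphi)_{H_{**}^2(\Omega)}$, the first identity says exactly $(\Phi,\varphi)_{H_{**}^2(\Omega)}=-(B(u,u),\varphi)_{H_{**}^2(\Omega)}$ for all $\varphi$, i.e.\ $\Phi=-B(u,u)$; equivalently, $\Phi\in H_{**}^2(\Omega)$ weakly solves $\Delta^2\Phi=-[u,u]$ iff $\Phi=-B(u,u)$. Plugging this into the second identity, and using that $\Phi\in H_{**}^2(\Omega)$ --- so that Lemma \ref{lemmaa} applies to the trilinear form --- I get
$$\int_\Omega[\Phi,u]v=\int_\Omega[u,\Phi]v=(C(u,\Phi),v)_{H_*^2(\Omega)}=-\big(C(u,B(u,u)),v\big)_{H_*^2(\Omega)}=-(D(u),v)_{H_*^2(\Omega)}\, ,$$
by linearity of $C$ in its second entry and the definition $D(u)=C(u,B(u,u))$. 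Hence the second identity is equivalent to
$$(u,v)_{H_*^2(\Omega)}+(D(u),v)_{H_*^2(\Omega)}-\lambda\int_\Omega u_xv_x-\int_\Omega fv=0\qquad\forall v\in H_*^2(\Omega)\, .$$
On the other hand, $J$ is of class $C^1$ on $H_*^2(\Omega)$ (by Lemma \ref{lemmad}(iii) for the term $d$ and the obvious smoothness of the quadratic and linear terms), and differentiating it while using $\langle d'(u),v\rangle=(D(u),v)_{H_*^2(\Omega)}$ gives
$$\langle J'(u),v\rangle=(u,v)_{H_*^2(\Omega)}+(D(u),v)_{H_*^2(\Omega)}-\lambda\int_\Omega u_xv_x-\int_\Omega fv\, ,$$
so that the displayed identity is nothing but $\langle J'(u),v\rangle=0$ for all $v\in H_*^2(\Omega)$, i.e.\ $u$ is a critical point of $J$.

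Chaining the two steps gives both implications: if $(u,\Phi)$ weakly solves \eq{mostro2}, then $\Phi=-B(u,u)$ weakly solves $\Delta^2\Phi=-[u,u]$ and $u$ is a critical point of $J$; conversely, if $u$ is a critical point of $J$ and $\Phi\in H_{**}^2(\Omega)$ weakly solves $\Delta^2\Phi=-[u,u]$ (equivalently $\Phi=-B(u,u)$), then reversing the computation shows that the pair $(u,\Phi)$ satisfies both weak identities, hence solves \eq{mostro2}. I expect the only genuinely delicate point --- and the one I would write out in full --- to be the bookkeeping around the boundary conditions: identifying which conditions in \eq{mostro2} are essential and which are the natural ones produced by the two scalar products, and justifying the integrations by parts, in particular the rewriting $\int_\Omega[\Phi,u]v=(C(u,\Phi),v)_{H_*^2(\Omega)}$, which must go through the density-and-continuity argument already used in the proof of Lemma \ref{lemmaa}, where the membership $\Phi\in H_{**}^2(\Omega)$ (so $\Phi_{xx}=\Phi_{xy}=0$ on the long edges) is exactly what kills the boundary contributions; everything else is routine.
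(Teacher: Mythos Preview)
Your proposal is correct and follows exactly the approach the paper intends: the paper does not give a detailed proof of this lemma but simply states that it follows ``by combining Lemmas \ref{lemmaa}--\ref{lemmac}--\ref{lemmad}'', and what you have written is precisely the unpacking of that combination --- identifying $\Phi=-B(u,u)$, rewriting $\int_\Omega[\Phi,u]v$ via $C$ and $D$, and matching with $\langle J'(u),v\rangle$ through Lemma \ref{lemmad}(iii).
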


The first step is then to prove geometrical properties (coercivity) and compactness properties (Palais-Smale condition) of $J$. Although the
former may appear straightforward, it requires delicate arguments. The reason is that no useful lower bound for $d(u)$ is available. We prove

\begin{lemma} \label{coercive}
For any $f\in L^2(\Omega)$ and any $\lambda\ge0$, the functional $J$ is coercive in $H_*^2(\Omega)$ and it is bounded from below. Moreover,
it satisfies the Palais-Smale (PS) condition.
\end{lemma}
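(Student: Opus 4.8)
The plan is to prove coercivity first, since the lower bound is an immediate consequence, and then verify the (PS) condition. The essential difficulty, as the authors already warn, is that $d(u)\ge 0$ gives no \emph{quantitative} control: to beat the buckling term $-\tfrac{\lambda}{2}\|u_x\|_{L^2}^2$ one needs a lower bound on $d(u)$ in terms of $\|u\|_{H^2_*}$, but $d$ is only quartic and may vanish to high order along sequences that drift toward the first eigenspace. The way around this is to decompose $H^2_*(\Omega)$ using the eigenfunctions $\{\overline e_k\}$ of Theorem \ref{spectrum}: fix $N$ so large that $\lambda_{N+1}>\lambda$, write $u=p+q$ with $p=\sum_{k\le N}(u,\overline e_k)_{H^2_*}\overline e_k$ in the finite-dimensional ``bad'' subspace $V_N:=\Span\{\overline e_1,\dots,\overline e_N\}$ and $q$ in its orthogonal complement $V_N^\perp$. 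On $V_N^\perp$ the Poincar\'e-type inequality refines to $\lambda\|q_x\|_{L^2}^2\le\frac{\lambda}{\lambda_{N+1}}\|q\|_{H^2_*}^2$, so the quadratic part $\tfrac12\|q\|_{H^2_*}^2-\tfrac{\lambda}{2}\|q_x\|_{L^2}^2\ge\tfrac12\big(1-\tfrac{\lambda}{\lambda_{N+1}}\big)\|q\|_{H^2_*}^2$ is coercive in $q$. It remains to control the (finitely many) $p$-directions.

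For the $p$-part I would argue as follows. On the finite-dimensional space $V_N$ all norms are equivalent, so $-\tfrac{\lambda}{2}\|p_x\|_{L^2}^2\ge -c_N\|p\|_{H^2_*}^2$ for a constant $c_N$, and I need $d(u)$ to dominate this. Here the key is that $d(u)=\tfrac14\|B(u,u)\|_{H^2_{**}}^2$ is strictly positive for $u\ne0$ (Lemma \ref{lemmad}(i)): I would show that along any sequence $u_n=p_n+q_n$ with $\|u_n\|_{H^2_*}\to\infty$ one cannot have $J(u_n)$ bounded above. Normalize $t_n:=\|u_n\|_{H^2_*}\to\infty$ and set $\hat u_n:=u_n/t_n$, so $\|\hat u_n\|_{H^2_*}=1$; up to a subsequence $\hat u_n\rightharpoonup \hat u$ in $H^2_*$. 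Dividing $J(u_n)$ by $t_n^4$ and using that $d$ is quartic and weakly continuous (Lemma \ref{lemmad}(iv)), $\tfrac{J(u_n)}{t_n^4}\to d(\hat u)$, while the quadratic and linear terms contribute $O(t_n^{-2})\to0$. If $J(u_n)$ stayed bounded above this forces $d(\hat u)=0$, hence $\hat u=0$, i.e. $\hat u_n\rightharpoonup 0$. But then by compactness of the embedding into $W^{1,4}$ and the estimate \eq{boundB}, $d(\hat u_n)\to 0$ \emph{faster} — one needs a finer accounting. The clean fix is to combine the two scales: since the $q$-part is already quadratically coercive, a sequence with $J(u_n)$ bounded above and $\|u_n\|\to\infty$ must have $\|q_n\|$ bounded, so $\|p_n\|\to\infty$; but $p_n$ lives in the finite-dimensional $V_N$, where $d$ restricted to the unit sphere attains a positive minimum $\mu_N>0$ by Lemma \ref{lemmad}(i) and compactness of the sphere, whence $d(p_n)\ge \mu_N\|p_n\|_{H^2_*}^4$ after rescaling; combined with the cross-terms $d(u_n)\ge \tfrac12 d(p_n)+\ldots$ (expanding the quartic and absorbing mixed terms via Young's inequality, using boundedness of $\|q_n\|$), the quartic growth in $\|p_n\|$ beats the quadratic loss $-c_N\|p_n\|^2$. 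Hence $J(u_n)\to+\infty$, a contradiction; $J$ is coercive and therefore bounded below.

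For the (PS) condition, let $\{u_n\}$ satisfy $|J(u_n)|\le C$ and $J'(u_n)\to 0$ in $\HH_*(\Omega)$. Coercivity gives $\{u_n\}$ bounded in $H^2_*$, so up to a subsequence $u_n\rightharpoonup u$. Writing $\langle J'(u_n),w\rangle=(u_n,w)_{H^2_*}+\langle d'(u_n),w\rangle-\lambda(u_{n,x},w_x)_{L^2}-\int_\Omega fw$ and recalling $\langle d'(u_n),w\rangle=(D(u_n),w)_{H^2_*}$, I would test with $w=u_n-u$. The term $(u_n,u_n-u)_{H^2_*}$ is the only non-compact one; all others converge: $D(u_n)\to D(u)$ strongly in $H^2_*$ by compactness of $D$ (Lemma \ref{lemmac}), $u_{n,x}\to u_x$ strongly in $L^2$ by the compact embedding $H^2_*\Subset H^1$, and $\int_\Omega f(u_n-u)\to 0$ by weak convergence. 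Since $\langle J'(u_n),u_n-u\rangle\to 0$ as well, we get $(u_n,u_n-u)_{H^2_*}\to 0$, i.e. $\|u_n\|_{H^2_*}^2\to (u,u)_{H^2_*}=\|u\|_{H^2_*}^2$; combined with weak convergence this yields $u_n\to u$ strongly in $H^2_*(\Omega)$, proving (PS). $\Box$
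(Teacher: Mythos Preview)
Your (PS) argument is correct and is the standard variant of the paper's (the paper tests separately with $u_n$ and with $\bar u$ after first noting that $J'(\bar u)=0$ by weak continuity, but testing with $u_n-u$ works the same way).

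For coercivity there is a genuine gap. In your ``clean fix'' you assert that boundedness of $J(u_n)$ together with $\|u_n\|_{H^2_*}\to\infty$ forces $\|q_n\|_{H^2_*}$ to stay bounded. This is false: using only $d(u_n)\ge0$ the splitting gives
$$M\ \ge\ -c_N'\|p_n\|_{H^2_*}^2+c_q\|q_n\|_{H^2_*}^2-C\|u_n\|_{H^2_*}\, ,$$
whence merely $\|q_n\|_{H^2_*}^2\le C(1+\|p_n\|_{H^2_*}^2)$, so $\|q_n\|$ may grow like $\|p_n\|$. Your subsequent expansion of $d(p_n+q_n)$ via Young's inequality then breaks down: by \eq{boundB} the mixed pieces $\|B(p_n,q_n)\|_{H^2_{**}}$ and the cross inner products are of order $\|p_n\|_{H^2_*}^2$, so their contribution to $d(u_n)$ is of the \emph{same} order $\|p_n\|_{H^2_*}^4$ as $d(p_n)$ itself and can cancel it. The inequality $d(u_n)\ge\tfrac12 d(p_n)-C$ you invoke is simply not available.

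The irony is that you had the paper's argument in hand and abandoned it one step too early. Once you have $\hat u_n\rightharpoonup 0$, the compact embedding $H^2_*(\Omega)\Subset H^1(\Omega)$ gives $(\hat u_n)_x\to 0$ in $L^2(\Omega)$. Now divide $J(u_n)\le M$ by $t_n^{2}$ rather than $t_n^{4}$:
$$\frac{M}{t_n^2}\ \ge\ \frac12+t_n^2\, d(\hat u_n)-\frac{\lambda}{2}\|(\hat u_n)_x\|_{L^2}^2-\frac{\|f\|_{L^2}}{\sqrt{\lambda_1}\, t_n}\ \ge\ \frac12-\frac{\lambda}{2}\|(\hat u_n)_x\|_{L^2}^2-\frac{C}{t_n}\, .$$
The left side tends to $0$ while the right side tends to $\tfrac12$, a contradiction. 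No spectral decomposition and no quantitative lower bound on $d$ are needed; the quartic term is simply dropped (it is nonnegative) after it has done its job of forcing $\hat u_n\rightharpoonup 0$. This is exactly the paper's proof.
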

\begin{proof} Assume for contradiction that there exists a sequence $\{v_n\}\subset H_*^2(\Omega)$ and $M>0$ such that
$$\lim_{n\to \infty}\|v_n\|_{H_*^2(\Omega)}\to \infty, \qquad J(v_n)\leq M.$$
Put $w_n=\frac{v_n}{\|v_n\|_{H_*^2(\Omega)}}$ so that $v_n=\|v_n\|_{H_*^2(\Omega)}w_n$ and
\neweq{norm1}
\|w_n\|_{H_*^2(\Omega)}=1\qquad\forall n\, .
\endeq
By combining the H\"older inequality with \eq{poincarry}, we infer that
\neweq{lowerb}
M\ge J(v_n)\ge \frac{1}{2}\|v_n\|_{H_*^2(\Omega)}^2+\|v_n\|_{H_*^2(\Omega)}^4d(w_n)-\frac{\lambda}{2}\|v_n\|_{H_*^2(\Omega)}^2\|(w_n)_x\|_{L^2(\Omega)}^2
-\frac{\|f\|_{L^2(\Omega)}}{\sqrt{\lambda_1}}\|v_n\|_{H_*^2(\Omega)},
\endeq
where we also used Lemma \ref{lemmad} (ii). By letting $n\to\infty$, this shows that $d(w_n)\to0$ which, combined with Lemma \ref{lemmad} and \eq{norm1},
shows that $w_n\rightharpoonup0$ in $H_*^2(\Omega)$; then, $(w_n)_x\rightarrow 0$ in $L^2(\Omega)$ by compact embedding. Hence, since $d(w_n)\ge 0$,
\eq{lowerb} yields
$$
o(1)=\frac{M}{\|v_n\|_{H_*^2(\Omega)}^2}\ge \frac{1}{2}+\|v_n\|_{H_*^2(\Omega)}^2d(w_n)
-\frac{\lambda}{2}\|(w_n)_x\|_{L^2(\Omega)}^2-\frac{\|f\|_{L^2(\Omega)}}{\|v_n\|_{H_*^2(\Omega)}\sqrt{\lambda_1}}\ge \frac{1}{2}+o(1)
$$
which leads to a contradiction by letting $n\to\infty$. Therefore $J$ is coercive. Since the lower bound for $J(v_n)$ in \eq{lowerb}
only depends on $\|v_n\|_{H_*^2(\Omega)}$, we also know that $J$ is bounded from below.\par
In order to prove that $J$ satisfies the (PS) condition we consider a sequence $\{u_n\}\subset H_*^2(\Omega)$ such that $J(u_n)$ is
bounded and $J'(u_n)\to0$ in $\HH_*(\Omega)$. By what we just proved, we know that $\{u_n\}$ is bounded and therefore, there exists
$\overline{u}\in H_*^2(\Omega)$ such that $u_n\rightharpoonup\overline{u}$ and, by weak continuity, $J'(\overline{u})=0$. Moreover, by Lemma \ref{lemmad},
\begin{align*}
&\langle J'(u_n),u_n\rangle=\|u_n\|_{H_*^2(\Omega)}^2+(D(u_n),u_n)_{H_*^2(\Omega)}-\lambda\|(u_n)_x\|_{L^2(\Omega)}^2-\int_\Omega{fu_n}\to\\
&\to 0 =\langle J'(\overline{u}),\overline{u}\rangle=\|\overline{u}\|_{H_*^2(\Omega)}^2+(D(\overline{u}),\overline{u})_{H_*^2(\Omega)}
-\lambda\|\overline{u}_x\|_{L^2(\Omega)}^2-\int_\Omega{f\overline{u}}\, .\end{align*}
Since $(D(u_n),u_n)_{H_*^2(\Omega)}\to(D(\overline{u}),\overline{u})_{H_*^2(\Omega)}$ by Lemma \ref{lemmac}, $\|(u_n)_x\|_{L^2(\Omega)}^2\to\|\overline{u}_x\|_{L^2(\Omega)}^2$
and $\int_\Omega{fu_n}\to\int_\Omega{f\overline{u}}$ by compact embedding, this proves that $\|u_n\|_{H_*^2(\Omega)}\to\|\overline{u}\|_{H_*^2(\Omega)}$.
This fact, together with the weak convergence $u_n\rightharpoonup\overline{u}$ proves that, in fact, $u_n\to\overline{u}$ strongly; this proves (PS).\end{proof}

Lemma \ref{coercive} shows that the (smooth) functional $J$ admits a global minimum in $H_*^2(\Omega)$ for any $f$ and $\lambda$. This minimum is
a critical point for $J$ and hence, by Lemma \ref{critical}, it gives a weak solution of \eqref{mostro2}. This proves the first part of Theorem \ref{loadf}.
Let us now prove the items.\par
(i) If $\lambda\leq \lambda_1$ and $f=0$, we see that any critical point $u$ of $J$ satisfies
$$0=\langle J'(u),u\rangle=\|u\|_{H_*^2(\Omega)}^2+4d(u)-\lambda\|u_x\|_{L^2(\Omega)}^2$$
where we also used Lemma \ref{lemmad} (iii). By Lemma \ref{lemmad} i) and \eq{poincarry}, this proves that $u=0$. Then we apply again Lemma
\ref{critical} and find $(u,\Phi)=(0,0)$.\par
(ii) If $f=0$ and $\lambda\in(\lambda_k,\lambda_{k+1}]$, then the twice differentiable functional $J$ is even and its second derivative $J''(0)$ at $0$
has Morse index $k$. By Lemma \ref{coercive} we may then apply \cite[Theorem 11]{algwaiz} (which is a variant of Theorem 5.2.23 p.369 in \cite{chang}),
to infer that $J$ has at least $k$ pairs of district nonzero critical points. Then by Lemma \ref{critical} there exist at least $k$ pairs of nontrivial
solutions of \eq{mostro2}.\par
(iii) For any $f\in L^2(\Omega)$, if $u$ is a critical point of the functional $J$ it satisfies $\langle J'(u),u\rangle=0$ and therefore,
by the H\"older inequality,
$$\|u\|_{H_*^2(\Omega)}^2+4d(u)-\lambda\|u_x\|_{L^2(\Omega)}^2\le\|f\|_{L^2(\Omega)}\|u\|_{L^2(\Omega)}\, .$$
In turn, by using Lemma \ref{lemmad} i) and twice \eq{poincarry}, we obtain
$$\left(1-\frac{\lambda}{\lambda_1}\right)\|u\|_{H_*^2(\Omega)}^2\le\frac{\|f\|_{L^2(\Omega)}}{\sqrt{\lambda_1}}\|u\|_{H^2_*(\Omega)}\, .$$
This gives the a priori bound
\neweq{apriori}
\|u\|_{H_*^2(\Omega)}\le\frac{\sqrt{\lambda_1}}{\lambda_1-\lambda}\, \|f\|_{L^2(\Omega)}\, .
\endeq

Next, we prove a local convexity property of the functional $J$. Let
$$Q(u):=\|u\|_{H_*^2(\Omega)}^2-\lambda\|u_x\|_{L^2(\Omega)}^2\quad\forall u\in H_*^2(\Omega)\, .$$
Then, for all $u,v\in H_*^2(\Omega)$ and all $t\in[0,1]$, we have
\neweq{QQ}
Q\Big(tu+(1-t)v\Big)-tQ(u)-(1-t)Q(v)=-t(1-t)\Big(\|u-v\|_{H_*^2(\Omega)}^2-\lambda\|u_x-v_x\|_{L^2(\Omega)}^2\Big)\, .
\endeq
Moreover, for all $u,v\in H_*^2(\Omega)$ and all $t\in[0,1]$, some tedious computations show that
$$
d\Big(tu+(1-t)v\Big)-td(u)-(1-t)d(v) =
$$
$$
= -\frac{t(1-t)}{4}\Big\{(t^2-3t+1)(\|B(v,u-v)\|_{H_{**}^2(\Omega)}^2-\|B(u,u-v)\|_{H_{**}^2(\Omega)}^2)$$
$$+2\big(B(v,v),B(v-u,v-u)\big)_{H_{**}^2(\Omega)}+2(t^2-t+1)\big(B(u,u-v),B(u+v,u-v)\big)_{H_{**}^2(\Omega)}$$
$$-4t(1-t)\big(B(u-v,u),B(v-u,v)\big)_{H_{**}^2(\Omega)}\Big\}$$
\neweq{dd}
\mbox{by \eq{boundB} }\ \le C\, t(1-t)\, (\|u\|_{H_*^2(\Omega)}^2+\|v\|_{H_*^2(\Omega)}^2)\, \|u-v\|_{H_*^2(\Omega)}^2\, ;
\endeq
here $C>0$ is a constant independent of $t$, $u$, $v$. Consider the ``unforced'' functional
\neweq{J0}
J_0(u)=\frac{1}{2}\|u\|_{H_*^2(\Omega)}^2+d(u)-\frac{\lambda}{2}\|u_x\|_{L^2(\Omega)}^2=\frac{Q(u)}{2}+d(u)\, ;
\endeq
by putting together \eq{QQ} and \eq{dd} we see that
$$
J_0\Big(tu+(1-t)v\Big)-tJ_0(u)-(1-t)J_0(v)\le
$$
$$
\le-\frac{t(1-t)}{2}\Big(\|u-v\|_{H_*^2(\Omega)}^2-\lambda\|u_x-v_x\|_{L^2(\Omega)}^2\Big)+
C\, t(1-t)\, (\|u\|_{H_*^2(\Omega)}^2+\|v\|_{H_*^2(\Omega)}^2)\, \|u-v\|_{H_*^2(\Omega)}^2
$$
\neweq{tecnici}
\le t(1-t)\Big(C\, (\|u\|_{H_*^2(\Omega)}^2+\|v\|_{H_*^2(\Omega)}^2)-\frac{\lambda_1-\lambda}{2\lambda_1}\Big)\|u-v\|_{H_*^2(\Omega)}^2\, .
\endeq
Take $f$ sufficiently small such that
\neweq{smallf}
\|f\|_{L^2(\Omega)}^2<K^2:=\frac{(\lambda_1-\lambda)^3}{4C\, \lambda_1^2}\, .
\endeq
By \eq{apriori} and \eq{smallf} we know that any critical point of $J$ satisfies
$$
\|u\|_{H_*^2(\Omega)}^2\le\frac{\lambda_1}{(\lambda_1-\lambda)^2}\, K^2=\frac{\lambda_1-\lambda}{4C\, \lambda_1}=:\rho^2\, ;
$$
put $B_\rho=\{u\in H_*^2(\Omega);\, \|u\|_{H_*^2(\Omega)}\le\rho\}$. Moreover, from \eq{tecnici} we know that
$$J_0\Big(tu+(1-t)v\Big)-tJ_0(u)-(1-t)J_0(v)\le0\qquad \forall u,v\in B_\rho\ ,$$
with strict inequality if $u\neq v$ and $t\not\in\{0,1\}$. This proves that $J_0$ is strictly convex in $B_\rho$ and since $J(u)$ equals
$J_0(u)$ plus a linear term (with respect to $u$), also $J$ is strictly convex in $B_\rho$.\par
Summarising, if \eq{smallf} holds, then we know that:\\
$\bullet$ by \eq{apriori} all the critical points of $J$ belong to $B_\rho$;\\
$\bullet$ by the first part of the proof we then know that there exists at least a critical point in $B_\rho$;\\
$\bullet$ $J$ is strictly convex in $B_\rho$.\par
We then deduce that $J$ admits a unique critical point in $B_\rho$ (its absolute minimum) and no other critical points elsewhere.
Together with Lemma \ref{critical}, this completes the proof of item (iii).\par
(iv) If $\lambda>\lambda_1$ we know from item (ii) that the unforced functional $J_0$ defined in \eq{J0} has two global minima $\pm\bar{u}\neq 0$.
Then a sufficiently small linear perturbation of $J_0$ has a local minimum in a neighborhood of both $\pm\bar{u}$. Whence, if $f$ is sufficiently
small, say $\|f\|_{L^2(\Omega)}<K$, then the functional $J$ defined by $J(u)=J_0(u)-\integ fu$ admits two local minima in two neighborhoods of both
$\pm\bar{u}$. These local minima, which we name $u_1$ and $u_2$, are the first two critical points of $J$. A minimax procedure then yields an additional
(mountain-pass) solution. Indeed, consider the set of continuous paths connecting $u_1$ and $u_2$:
$$
\Gamma:=\Big\{p\in C^0([0,1],H^2_*(\Omega));\, p(0)=u_1,\, p(1)=u_2\Big\}\, .
$$
Since by Lemma \ref{coercive} the functional $J$ satisfies the (PS) condition, the mountain-pass Theorem guarantees that the level
$$
\min_{p\in\Gamma}\ \max_{t\in[0,1]}\ J\big(p(t)\big)\, >\, \max\Big\{J(u_1),J(u_2)\Big\}
$$
is a critical level for $J$; this yields a third critical point. By Lemma \ref{critical} this proves the existence of (at least) three weak
solutions of \eq{mostro2}.

\section{Proof of Theorem \ref{withangers}}\label{proofwith}

Similar to Lemma \ref{critical}, the functional whose critical points are solutions of problem \eq{finalmostro} is
$$
J(u)=\frac{1}{2}\|u\|_{H_*^2(\Omega)}^2+\int_\Omega{\Upsilon(y)\left(\frac{k}{2}(u^+)^2+\frac{\delta}{4}
(u^+)^4\right)}+d(u)-\frac{\lambda}{2}\|u_x\|_{L^2(\Omega)}^2-\int_\Omega{fu}\, .
$$
And similar to Lemma \ref{coercive} one can prove that for any $f\in L^2(\Omega)$ and any $\lambda\ge0$, the functional $J$ is coercive in $H_*^2(\Omega)$,
it is bounded from below and it satisfies the (PS) condition. Then the smooth functional $J$ admits a global minimum in $H_*^2(\Omega)$ for any $f$ and
$\lambda$. This minimum is a critical point for $J$ and hence a weak solution of \eqref{finalmostro}. This proves the first part of Theorem \ref{withangers}.
Let us now prove the items.\par
(i) The proof of this item follows the same steps as item (iii) of Theorem \ref{loadf}: it suffices to notice that the additional term
$\int_\Omega{\Upsilon(y)\left(\frac{k}{2}(u^+)^2+\frac{\delta}{4}(u^+)^4\right)}$ is also convex.\par
(ii) If $f=0$, then $u=0$ is a solution for any $\lambda\ge0$. We just need to show that it is not the global minimum which we know to exist.
Let $\overline{e}_1$ and $\alpha$ be as in \eq{alpha} and consider the function
\neweq{gf}
g(t):=J(t\overline{e}_1)=-\frac{\lambda-\lambda_1}{2\lambda_1}\, t^2+\frac{k\, \alpha}{2}\, (t^+)^2
+\frac{\delta\, (t^+)^4}{4}\integ\Upsilon(y)\overline{e}_1^4+t^4\, d(\overline{e}_1)\qquad t\in\R\, .
\endeq
Since $\lambda>\lambda_1$, the coefficient of $(t^-)^2$ is negative and the qualitative graph of $g$ is as in Figure \ref{qualitative}
(on the left the case where $\lambda<\overline{\lambda}$ so that the coefficient of $(t^+)^2$ is nonnegative, on the right the case where also the coefficient of
$(t^+)^2$ is negative).
\begin{figure}[ht]
\begin{center}
{\includegraphics[height=16mm, width=30mm]{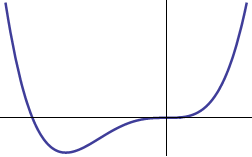}}\qquad\qquad{\includegraphics[height=16mm, width=30mm]{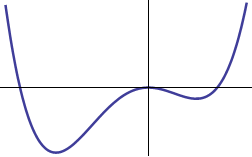}}
\caption{Qualitative graphs of the functions $g$ (left) and $h$ (right).}\label{qualitative}
\end{center}
\end{figure}
It is clear that there exists $\overline{t}<0$ such that $g(\overline{t})<0$. This means that $J(\overline{t}\overline{e}_1)<0$ and that $0$ is not
the absolute minimum of $J$. This completes the proof of item (ii).\par
(iii) We study first the case where $f=0$ and we name $J_0$ the unforced functional, that is,
$$
J_0(u)=\frac{1}{2}\|u\|_{H_*^2(\Omega)}^2+\int_\Omega{\Upsilon(y)\left(\frac{k}{2}(u^+)^2+\frac{\delta}{4}
(u^+)^4\right)}+d(u)-\frac{\lambda}{2}\|u_x\|_{L^2(\Omega)}^2\, .
$$
We consider again the function $g$ in \eq{gf} that we name here $h$ in order to distinguish their graphs, $h(t)=g(t)$ as in \eq{gf}.
Since $\lambda>\overline{\lambda}$, the coefficient of $(t^+)^2$ is now also negative and the qualitative graph of $h$ is as in the right
picture of Figure \ref{qualitative}. Then the function $h$ has a nondegenerate local maximum at $t=0$ which means that also the map
$t\mapsto J_0(t\overline{e}_1)$ has a local maximum at $t=0$ and it is strictly negative in a punctured interval containing $t=0$. Let
$E={\rm span}\{\overline{e}_k;\, k\ge2\}$ denote the infinite dimensional space of codimension $1$ being the orthogonal complement of
${\rm span}\{\overline{e}_1\}$. By the improved Poincar\'e inequality
$$
\lambda_2\|v_x\|_{L^2(\Omega)}^2\le\|v\|_{H^2_*(\Omega)}^2\qquad\forall v\in E
$$
and by taking into account Lemma \ref{lemmad} (i) and $\lambda\le\lambda_2$, we see that
$$
J_0(u)\ge\frac{\lambda_2-\lambda}{2\lambda_2}\|u\|_{H_*^2(\Omega)}^2+\int_\Omega{\Upsilon(y)\left(\frac{k}{2}(u^+)^2+\frac{\delta}{4}
(u^+)^4\right)}\ge0\qquad\forall u\in E\, .
$$
Therefore, the two open sets
$$A^+=\{u\in H^2_*(\Omega);\, (u,\overline{e}_1)_{H^2_*(\Omega)}>0,\, J_0(u)<0\}\, ,\quad
A^-=\{u\in H^2_*(\Omega);\, (u,\overline{e}_1)_{H^2_*(\Omega)}<0,\, J_0(u)<0\}$$
are disconnected. Since $J_0$ satisfies the (PS) condition and is bounded from below, $J_0$ admits a global minimum $u^+$ (resp.\ $u^-$) in $A^+$
(resp.\ $A^-$) and $J_0(u^\pm)<0$.\par
A sufficiently small linear perturbation of $J_0$ then has a local minimum in a neighborhood of both $u^\pm$. Whence, if $f$ is sufficiently
small, say $\|f\|_{L^2(\Omega)}<K$, then the functional $J$ defined by $J(u)=J_0(u)-\integ fu$ admits a local minimum in two neighborhoods of
both $u^\pm$. A minimax procedure then yields an additional (mountain-pass) critical point, see the proof of Theorem \ref{loadf} (iv) for the
details. This yields a third solution of \eq{finalmostro}.

\end{document}